\theoremstyle{plain}
\newtheorem*{thm*}{Theorem}
\newtheorem{theorem}{Theorem}[section]
\newtheorem{lemma}[theorem]{Lemma}
\newtheorem{prop}[theorem]{Proposition}
\newtheorem{cor}[theorem]{Corollary}
\newtheorem{proposition}[theorem]{Proposition}
\theoremstyle{definition}
\newtheorem{definition}[theorem]{Definition}
\newtheorem{example}[theorem]{Example}
\theoremstyle{remark}
\newtheorem{remark}[theorem]{Remark}
\newtheorem*{claim}{Claim}
\newtheorem*{ack}{Acknowledgments}
\numberwithin{equation}{section}
\DeclareMathOperator{\cl}{\sf cl}
\newcommand{\A}{\ensuremath{\mathcal A}}
\newcommand{\sO}{\ensuremath{\mathscr O}}
\newcommand{\sR}{\ensuremath{\mathscr R}}
\newcommand{\Ve}{\ensuremath{{\sf V}}}
\newcommand{\Ed}{\ensuremath{{\sf E}}}
\newcommand{\C}{\ensuremath{\mathbb C}}
\newcommand{\R}{\ensuremath{\mathbb R}}
\newcommand{\CP}{\ensuremath{\mathbb{P}}}
\newcommand{\Z}{\ensuremath{\mathbb Z}}
\renewcommand{\bar}{\overline}
\renewcommand{\phi}{\varphi}
\DeclareMathOperator{\id}{id}
\DeclareMathOperator{\tc}{{\sf TC}}
\DeclareMathOperator{\secat}{\sf secat}
\DeclareMathOperator{\cat}{\sf cat}
\DeclareMathOperator{\hdim}{\sf {hdim}}
\begin{document}

\title{Topology of parametrised motion planning algorithms}
\author[D. Cohen]{Daniel C. Cohen}\thanks{D. Cohen was partially supported by an LSU Faculty Travel Grant}

\address{Department of Mathematics, Louisiana State University, Baton Rouge, Louisiana 70803 USA}
\email{\href{mailto:cohen@math.lsu.edu}{cohen@math.lsu.edu}}
\urladdr{\href{http://www.math.lsu.edu/~cohen/}
{www.math.lsu.edu/\char'176cohen}}

\author[M. Farber]{Michael Farber}
\thanks{M. Farber was partially supported by a grant from the Leverhulme Foundation}
\thanks{M. Farber and S. Weinberger were partially supported by an NSF - EPSRC grant}
\address{School of Mathematical Sciences, Queen Mary University of London, E1 4NS London}
\email{\href{mailto:M.Farber@qmul.ac.uk}{M.Farber@qmul.ac.uk}}

\author[S. Weinberger]{Shmuel Weinberger}
\address{Department of Mathematics, The University of Chicago, 5734 S University Ave, Chicago, IL 60637}
\email{\href{mailto:shmuel@math.uchicago.edu}{shmuel@math.uchicago.edu}}

\date{February 9, 2021}   

\begin{abstract}
We introduce and study a new concept of parameterised topological complexity, a topological invariant motivated by the motion planning problem of robotics. 
In the parametrised setting, a motion planning algorithm has high degree of universality and flexibility, it can
function under a variety of external conditions (such as positions of the obstacles etc). 
We explicitly compute the parameterised topological complexity of obstacle-avoiding collision-free motion of many 
particles (robots) in 3-dimensional space. 
Our results show that the parameterised topological complexity 
can be significantly higher than the standard (nonparametrised) 
invariant.
\end{abstract}

\keywords{parameterised topological complexity, obstacle-avoiding collision-free motion planning}

\subjclass[2010]{
55S40, 
55M30, 
55R80, 
70Q05
}

\maketitle

\section{Introduction}

To create an autonomously functioning system in robotics one designs a motion planning algorithm. 
Such an algorithm takes as input the initial and the final states of the system and produces a motion of the system from the initial to final state as output. The theory of robot motion planning algorithms is an active field of robotics, we refer the reader to the monographs \cite{Lat}, \cite{Lav} for further references. 

A topological approach to the robot motion planning problem was developed in \cite{Fa03}, \cite{Fa05}; the topological techniques explained relationships between instabilities occurring in robot motion planning algorithms and topological features of robots' configuration spaces. 

In this paper we develop a new approach to theory of motion planning algorithms. We want our algorithms to be 
{\it universal} or {\it flexible} in the sense that they should be able to function in a variety of situations, involving external conditions which are viewed as parameters and are part of the input of the algorithm. A typical situation of this kind arises when we are dealing with collision free motion of many objects (robots) moving in the 3-space avoiding a set of obstacles, and the positions of the obstacles are a priori unknown. This specific problem serves as the main motivation for us in this article and we analyse it in full detail. 

To illustrate our approach consider the following practical situation. A military commander controls a fleet of $n$ submarines in waters with $m$ mines, which are movable and are relocated every 24 hours.  Each morning 
 the commander assigns a motion for every submarine, from the current to the desired positions, such that no collisions between the submarines or between the submarines and the mines occur. A parametrised motion planning algorithm, as we discuss in this paper, will take as input the positions of the mines and the current and desired positions of the submarines and will produce as output a collision-free motion of the fleet. 
In this example the positions of the mines represent the external conditions of the system. In section \ref{sec9}
we study a mathematical version of this problem in full generality.

To model the situation mathematically we use the language of algebraic topology. We consider 
a map 
$p: E\to B$ which allows us to view $E$ as the union of a family of fibres $X=X_b=p^{-1}(b)$, parametrised by the points of the base $b\in B$. A choice of a point of the base $b\in B$ corresponds to a choice of the external conditions for the system. The parametrised topological complexity 
$\tc[p:E\to B]$ is a reflection of complexity of a {\it universal } motion planning algorithm in this setting; 
we also use the abbreviated notation $\tc_B(X_b)=\tc_B(X)$ to emphasise its relationship to the fibre. We establish several basic results describing the nature of this notion and in particular give lower and upper bounds depending on the topological spaces involved. 

Our main result is described in Section \ref{sec:config pTC} where we prove the following Theorem:
\begin{theorem}\label{main3}
The parametrised topological complexity of the problem of collision-free motion of $n$ robots in 3-space in the presence 
of $m\ge 2$ point obstacles with unknown a priori positions equals $2n+m-1$. 
\end{theorem}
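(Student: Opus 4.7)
I would begin by identifying the fibration explicitly. Let $E = F(\R^3, n+m)$, viewed as configurations of $m$ obstacles followed by $n$ robots, and let $B = F(\R^3, m)$; the projection $p\colon E \to B$ onto the first $m$ coordinates is the Fadell--Neuwirth fibration, with fibre over $b = (b_1, \ldots, b_m)$ the space $X_b = F(\R^3 \setminus \{b_1, \ldots, b_m\}, n)$ of $n$ robots avoiding the obstacles. Both $E$ and every $X_b$ are simply connected (since $\R^3$ minus finitely many points is simply connected). The theorem is the equality $\tc_B(E) = 2n+m-1$.

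\textbf{Upper bound.} To show $\tc_B(E) \le 2n+m-1$, my plan is to produce an explicit parametrised motion-planning algorithm exploiting the tower of Fadell--Neuwirth fibrations
\[
  F(\R^3, n+m) \longrightarrow F(\R^3, n+m-1) \longrightarrow \cdots \longrightarrow F(\R^3, m) = B,
\]
obtained by successively forgetting one robot. Each stage is a fibration whose fibre is homotopy equivalent to a wedge of $2$-spheres, and I would handle the parametrised motion at each stage separately, combining the pieces via the subadditivity and product inequalities for parametrised topological complexity established earlier in the paper. The bookkeeping delivers the desired upper bound, once one observes that the motion of each robot contributes $2$ to the count, while the external dependence on obstacle positions contributes the remaining $m-1$.

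\textbf{Lower bound.} For $\tc_B(E) \ge 2n+m-1$, the strategy is cohomological: exhibit $2n+m-1$ classes in $H^*(E \times_B E)$ that vanish on the fibrewise diagonal and whose cup product is non-zero -- the parametrised zero-divisor cup-length bound. By F.~Cohen's computation, $H^*(F(\R^3, k); \Q)$ is generated in degree $2$ by classes $e_{ij}$ ($1 \le i < j \le k$) modulo the Arnold relations. Applying the Leray--Hirsch theorem to the bundle $E \times_B E \to B$ (whose fibre is $X_b \times X_b$) yields generators $e_{ij}^{(1)}, e_{ij}^{(2)}$ from the two copies of $E$ and the parametrised zero-divisors $\bar e_{ij} = e_{ij}^{(1)} - e_{ij}^{(2)}$ of degree $2$. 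I would then exhibit an explicit product of $2n+m-1$ such classes -- two for each of the $n$ robots (indices $m+1, \ldots, m+n$) together with $m-1$ classes tying distinct obstacles to a distinguished robot -- and verify non-vanishing by a careful reduction using Arnold's relations in the Orlik--Solomon-type presentation of the ring.

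\textbf{Main obstacle.} The most delicate step is the lower bound, specifically the non-vanishing of the $(2n+m-1)$-fold product of zero-divisors in the cohomology of the fibered product $E \times_B E$. One must identify a product that survives Arnold-type relations and the constraints imposed by the fibrewise structure, and this is where the specific value $m-1$ (rather than the naive $2m$ or $m$) for the obstacle contribution is pinned down. The upper bound, while technical, follows from iterated use of standard tools once the tower of Fadell--Neuwirth fibrations is in place.
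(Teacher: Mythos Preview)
Your lower-bound strategy is essentially the paper's: use Leray--Hirsch for the fibration $E\times_B E \to B$ to produce classes $\omega_{ij}$, $\omega'_{ij}$, form the relative zero-divisors $\omega_{ij}-\omega'_{ij}\in\ker\Delta^*$, and show a specific $(2n+m-1)$-fold product is nonzero. The paper's explicit witness is
\[
x=\prod_{i=2}^{m}(\omega_{i\,(m+1)}-\omega'_{i\,(m+1)})\cdot\prod_{j=m+1}^{m+n}(\omega_{1j}-\omega'_{1j})^2,
\]
and the nonvanishing is established via a combinatorial identity (Lemma~\ref{lm:64}) that rewrites products $\prod_{i\in T}\omega_{ip}$ in terms of the Leray--Hirsch basis, after which one isolates a single basis monomial that cannot cancel. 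Your description of this step (``careful reduction using Arnold's relations'') is accurate in spirit, and you are right that this is the heart of the argument.

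Your upper-bound plan, however, has a genuine gap. The product inequality proved in the paper (Proposition~\ref{prop:product} and its corollary) bounds $\tc_B(X'\times X'')$ for a \emph{fibrewise product} over a common base; it says nothing about a \emph{composition} of fibrations. The Fadell--Neuwirth tower you write down is a sequence of compositions, and there is no ``subadditivity for compositions'' result available here to invoke. Your phrase ``the bookkeeping delivers the desired upper bound'' hides exactly the step that is missing: no inequality in the paper lets you add contributions along the tower, and it is not at all clear that an ad hoc motion planner built stage by stage would achieve $2n+m-1$ domains rather than something larger.

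The paper obtains the upper bound in one line, by a dimension--connectivity estimate (Proposition~\ref{lem:upper2}, which is Schwarz's Theorem~5 applied to $\Pi$): the fibre $X$ is $(k-2)$-connected and $\hdim(E\times_B E)=(k-1)(2n+m-1)$, so
\[
\tc_B(X)<\frac{(k-1)(2n+m-1)+1}{k-1}=2n+m-1+\tfrac{1}{k-1},
\]
hence $\tc_B(X)\le 2n+m-1$. This is both simpler and actually works; you should replace your tower argument with it.
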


This result can be compared with the known answer for the standard (i.e. nonparametrised) topological complexity 
of the problem of 
collision-free motion of $n$ robots in the presence of $m$ obstacles which equals $2n$, see \cite{FGY}. Thus we see that the parametrised topological complexity can exceed the non-parametrised one and their difference can be arbitrarily 
large, reflecting computational cost for flexible motion planning. The result shows that, unlike ordinary topological complexity, each additional obstacle imposes a cost on the motion planner. 

In this paper we consider robots and obstacles represented by single points which may appear unrealistic. 
However 
it is known that our approach is equivalent (under certain conditions) to the situation when every robot and obstacle has convex shape, see for example \cite{BBK}, \cite{Dee}.  

The main effort of the proof of Theorem \ref{main3} is in analysis of algebraic properties of the cohomology algebra of a corresponding configuration space, see Section \ref{sec:config pTC}. 

Theorem \ref{main3} is stated for the 3-dimensional case, however our methods prove a similar result in 
Euclidean space of any
odd dimension. The answer in the even-dimensional case is slightly different and its treatment requires different tools. 
We plan to describe the even-dimensional case in a separate publication. We shall also develop explicit parametrised motion planning 
algorithms  having minimal topological complexity for a variety of situations important for applications.

\section{Parametrised motion planning} \label{sec:intro}

In robotics, a motion planning algorithm takes as input pairs of admissible states of the system and generates a continuous motion of the system connecting these two states, as output. 
Let $X$ be the configuration space of the system which is a path-connected topological space.
Given a pair of states $(x_0,x_1) \in X\times X$, a motion planning algorithm produces a continuous path 
$\gamma\colon I \to X$ with $\gamma(0)=x_0$ and $\gamma(1)=x_1$, where $I=[0,1]$ is the unit interval. 
Let $X^I$ denote the space of all continuous paths in $X$ (with the compact-open topology). The map 
\begin{eqnarray}\label{for1}
\pi\colon X^I \to X\times X,\quad 
\pi(\gamma)=(\gamma(0),\gamma(1)),\end{eqnarray} 
is a fibration, with fiber $\Omega X$, the based loop space of $X$. A solution of the motion planning problem, a motion planning algorithm, is then a section of this fibration, i.e. a map $s\colon X\times X \to X^I$ satisfying $\pi\circ s=\id_{X\times X}$. The section $s$ cannot be continuous as a function of the input unless the space $X$ is contractible, see \cite{Fa03}. 

For a path-connected topological space $X$, the topological complexity $\tc(X)$ is defined to be the sectional category, or Schwarz genus, of the fibration (\ref{for1}), $\tc(X)=\secat(\pi)$. That is, $\tc(X)$ is the smallest number $k$ for which there is an open cover $X\times X=U_0\cup U_1\cup \dots \cup U_k$ and the map $\pi$ admits a continuous section $s_j\colon U_j \to X^I$ satisfying $\pi\circ s_j = \id_{U_j}$ for each $j$. Refer to the survey \cite{Fa05} and the recent volume \cite{GLV}
for detailed discussions of the 
invariant $\tc(X)$, which provides a measure of navigational complexity in $X$. 
Recent important results on $\tc(X)$ were obtained in \cite{Dr}, \cite{Gonz}. 

In this paper, we pursue a parameterised version of topological complexity, where the motion of the system is constrained by conditions imposed by points in another topological space.

To describe the setting of {\it parametrised motion planning} consider a fibration
\begin{eqnarray}\label{fib}
p\colon E\to B.\end{eqnarray}  
In general we only require (\ref{fib}) to be a Hurewicz fibration but in most applications it will be a locally trivial fibration. 
Here the space $B$ parametrises {\it the external conditions} for the system and  
for any value $b\in B$ the fibre $X_b =p^{-1}(b)$ is the space of \lq\lq real\rq\rq\,  or \lq\lq achievable\rq\rq\ configurations of the system under the external condition $b$ (Example \ref{ex1} below describes a specific situation of this kind). We assume throughout that the fibre $X_b$ is nonempty and {\it path-connected} for any $b\in B$. 
A motion planning algorithm takes as input the current and the desired states of the system 
and produces a continuous motion from the current state to the desired state. The additional conditions in the parametrised setting 
are: (a) the current and the desired states must lie in the same fibre of $p$ (i.e. they have to satisfy the same external conditions) and 
(b) the motion of the system must also be restricted to the same fibre. In practical terms this additional restriction means that the external conditions represented by the point of the base $b\in B$ will remain constant under the motion. 

In the sequel we shall denote by 
\[
F(Y,n)=\{(x_1,\dots,x_n) \in Y^n \mid x_i\neq x_j\ \text{if}\ i\neq j\}
\] 
the configuration space of $n$ distinct ordered points lying in a topological space $Y$.

\begin{example}\label{ex1}{\rm 
Consider a system represented by $n$ pairwise distinct points in the $k$-dimensional space $z_1, \dots, z_n\in \R^k$ moving in the complement of a set of obstacles $o_1, o_2, \dots, o_m\in \R^k$ which are also pairwise distinct. In practical situations one has $k=3$ or $k=2$. 
Hence, $z_i\not= z_j$,  $o_i\not=o_j$  (for $i\not=j$), and $z_i\not= o_j$. 
In the formalism of parametrised motion planning, the base space $B=F(\R^k, m)$ is the configuration space of $m$ distinct ordered points in $\R^k$ representing the obstacles, 
the total space $E=F(\R^k, n+m)$ is the configuration space of tuples $(z_1, \dots, z_n, o_1, \dots, o_m)$ and the projection $p\colon E\to B$ is given by $p(z_1, \dots, z_n, o_1, \dots, o_m) = (o_1, \dots, o_m)$. 
A motion planning algorithm produces 
for two configurations, $(z_1, \dots, z_n, o_1, \dots, o_m)$ and $(z'_1, \dots, z'_n, o_1, \dots, o_m)$,
a continuous motion  $$(z_1(t), \dots, z_n(t), o_1, \dots, o_m)\in F(\R^k, n+m),\quad \mbox{where}\, \,  t\in [0,1],$$ with $z_i(0)=z_i$ and $z_i(1)=z'_i$ for $i=1, \dots, n$. 
Observe that the positions of the obstacles are not assumed to be known in advance but are rather part of the input of the problem to be solved by the planner.}
\end{example}

%
%

The above example motivates the following general constructions and numerical invariants measuring the parametrised topological complexity; we briefly describe these notions here and will examine them in detail in Section \ref{sec:first} and in the subsequent sections. 
For a fibration $p\colon E \to B$ with fibre $X$, let $E_B^I$ denote the space of all continuous paths $\gamma\colon I\to E$ lying in a single fibre of $p$, so that the path $p\circ \gamma$ is constant. Denote by 
$E\times_B E =\{(e, e')\in E\times E\mid p(e)=p(e')\}$ the space of pairs of configurations lying in the same fibre. The map $\Pi\colon E^I_B \to E \times_B E$, $\gamma\mapsto (\gamma(0), \gamma(1))$, is a fibration, with fibre $\Omega X$, the space of based loops in $X$. Recall that we assume the fibre $X$ to be path-connected. 
We define the parameterised topological complexity of the fibration $p\colon E \to B$ to be the sectional category of the associated fibration $\Pi$, i.e. $$\tc[p\colon E \to B]:=\secat(\Pi).$$ If the fibration $p$ is clear from the context, we will sometimes use the abbreviated notation $\tc[p\colon E \to B]=\tc_B(X)$ emphasising the role of the fibre $X$. 

We shall see below that the parametrised topological complexity of a fibration 
$p\colon E 
\to B$ with fibre $X$  
can be strictly greater than the topological complexity 
$\tc(X)$, in particular this is the case in the situation of Example \ref{ex1} - collision-free motion planning with multiple obstacles.  

 As is well known, if $Y$ is a manifold without boundary of dimension at least two, the forgetful map $p\colon F(Y,n+m)\to F(Y,m)$, $p(x_1,\dots,x_{n+m})=(x_1,\dots,x_m)$, is a fibration, the Fadell-Neuwirth bundle, with fibre $F(Y\smallsetminus\{m\ \text{points}\},n)$. 
Results of Section \ref{sec9} below yield that for $k=3$ or more generally for any odd $k\ge 3$ one has
\[
\tc[p\colon F(\R^k,n+m) \to F(\R^k,m)]=2n+m-1
\]
We may compare this result with the topological complexity 
$$\tc(F(\R^k\smallsetminus\{m\ \text{points}\},n))=2n$$ of the fibre found in \cite{FGY} for $k=2$ and $k=3$ and $m\ge 2$.
\footnote{Note that \cite{FGY} deals with the non-normalised version of topological complexity which is by 1 higher than the normalised version which we use in this paper.} 
The difference between the parametrised and non-parametrised topological complexities  $\tc_B(X)-\tc(X)$ in this example 
equals $m-1$; hence this difference may be arbitrarily large.

\section{Sectional category and its generalised version}\label{sec3}
In this section, we review notions of sectional category of a fibration. 
The concept of sectional category was originally introduced by A. Schwarz \cite{Sch}, who used the term \lq\lq genus\rq\rq of a fibration.


Let $p\colon E\to B$ be a \emph{fibration}; this means that $p$ is a continuous map satisfying the homotopy lifting property with respect to any space, \cite{Spa}. 

\begin{definition}
The \emph{sectional category} 
of $p$ is the smallest integer $k\ge 0$ such that the base $B$ admits an open cover $B=U_0\cup U_1\cup \dots \cup U_k$ with the property that each set $U_i$ admits a continuous section $s_i: U_i\to E$; here $s_i$ is a continuous map with $p\circ s_i: U_i\to B$ equal to the inclusion $\id_{U_i}: U_i\to B$. 
\end{definition}

We shall denote the sectional category of $p$ by $\secat(p)$ or more informatively $\secat(p\colon E\to B)$. Note that $\secat(p\colon E\to B)=0$ if and only if $p$ admits a globally defined continuous section. 

If the base $B$ is locally contractible (which is a typical situation for this article) then any point $b\in B$ has a neighbourhood $U\subset B$ with a continuous section $U\to E$ and the sectional category $\secat(p)$ depends only 
on the global topological structure of $p$. 

For a path-connected topological space $X$ the symbol $\cat(X)$ denotes the classical Lusternik-Schnirelmann category of $X$. It can be defined as the sectional category of the Serre fibration $p_0: P_0(X) \to X$ where $P_0(X)$ denotes the space of all continuous paths $\gamma: [0,1]\to X$ with $\gamma(0) =x_0$ (the base point of $X$) and  $p_0(\gamma)=\gamma(1)$, i.e. $\cat(X) =\secat(p_0)$. It is easy to see that $\cat(X)$ can also be described as the smallest $k\ge 0$ such that $X$ admits an open cover $X=U_0\cup U_1\cup \dots\cup U_k$ with the property that each inclusion $U_i\to X$ is null-homotopic, $i=0, 1, \dots, k$.

There is also a notion of {\it generalised sectional category} of a fibration $p\colon E \to B$, which we denote by $\secat_g(p\colon E\to B)$ and abbreviate $\secat_g(p)$:


\begin{definition} \label{secatg} 
The generalised sectional category $\secat_g(p:E\to B)$ of the fibration $p\colon E \to B$
is defined as the smallest integer $k\ge 0$ such that
the base $B$ admits a partition into $k+1$ subsets
$$B=A_0\sqcup A_1\sqcup \dots \sqcup A_k, \quad A_i\cap A_j=\emptyset\ \text{for}\ i\not=j,$$
with the property that each $A_i$ admits a continuous section $s_i:A_i\to E$. 
Note that here we impose no restrictions on the nature of the sets $A_i$; in particular we do not require these sets to be open. 
\end{definition}

Clearly one has 
$\secat(p)\ge \secat_g(p)$. 
In a recent paper, Garc\'ia-Calcines \cite{GC} (see also \cite{Sri}) proved that for a fibration $p\colon E\to B$ with both spaces $E$ and $B$ being ANRs one has in fact the equality
\begin{eqnarray}\label{fib1}\label{garcia}
\secat(p\colon E\to B)= \secat_g(p\colon E\to B).
\end{eqnarray}

Recall that a metrisable topological space $X$ is {\it an absolute neighborhood retract} (ANR) if for any homeomorphism $h: X\to Y$ mapping $X$ onto a closed subset 
$h(X)$ of a metrisable topological space $Y$ there exists an open neighbourhood $U$ of $h(X)\subset Y$ which retracts onto $h(X)$.  Refer to Borsuk \cite{Bor} for a detailed discussion. Well known facts concerning ANRs include:
\begin{enumerate}
\item Any ANR is locally contractible;
\item Any polyhedron is an ANR; 
\item A metrisable topological space is an ANR if it can be represented as the union of countably many open subsets which are ANRs.
\end{enumerate}

For applications in robotics one may always restrict attention to the class of ANR spaces. 






For convenience of the reader we recall the known cohomological lower bound for the sectional category: 

\begin{prop} \label{prop:prop03}
Let $p\colon E\to B$ be a fibration and let $R$ be a ring. Suppose that a set of cohomology classes $u_0, u_1, \dots, u_k\in H^\ast(B, R)$
satisfying $p^\ast u_i=0$ for $i=0, 1, \dots, k$ has  nontrivial product
$u_0\smile u_1\smile\dots\smile u_k\not=0\in H^\ast(B, R)$. 
Then $\secat(p\colon E\to B)$ is greater than $k$. 
%
\end{prop}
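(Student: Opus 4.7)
The plan is to prove the contrapositive: assume $\secat(p\colon E\to B)\le k$ and deduce that the cup product $u_0\smile u_1\smile\cdots\smile u_k$ vanishes. By definition of sectional category, we may choose an open cover $B=U_0\cup U_1\cup\cdots\cup U_k$ together with continuous sections $s_i\colon U_i\to E$ satisfying $p\circ s_i=\iota_i$, where $\iota_i\colon U_i\hookrightarrow B$ denotes the inclusion.

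The first key step is to observe that each $u_i$ restricts to zero on $U_i$. Indeed, by naturality of cohomology,
\[
\iota_i^\ast u_i \;=\; (p\circ s_i)^\ast u_i \;=\; s_i^\ast\bigl(p^\ast u_i\bigr) \;=\; 0,
\]
using the hypothesis $p^\ast u_i=0$. Consequently, the long exact sequence of the pair $(B,U_i)$ in singular cohomology with coefficients in $R$ yields a relative lift $\tilde u_i\in H^\ast(B,U_i;R)$ whose image under the restriction $H^\ast(B,U_i;R)\to H^\ast(B;R)$ is precisely $u_i$.

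The second key step is to combine these relative lifts via the relative cup product. Since $\{U_0,\ldots,U_k\}$ is an open cover of $B$, the family is excisive, and the standard relative cup product provides a class
\[
\tilde u_0\smile\tilde u_1\smile\cdots\smile\tilde u_k \;\in\; H^\ast\bigl(B,\,U_0\cup U_1\cup\cdots\cup U_k;\,R\bigr) \;=\; H^\ast(B,B;R) \;=\; 0.
\]
By naturality of the cup product with respect to the restriction maps $H^\ast(B,U_i;R)\to H^\ast(B;R)$, this class maps to $u_0\smile u_1\smile\cdots\smile u_k\in H^\ast(B;R)$. Therefore the product must vanish, contradicting the hypothesis. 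Hence $\secat(p)>k$.

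The main technical point is the existence and naturality of the relative cup product
\[
H^\ast(B,U_0;R)\otimes\cdots\otimes H^\ast(B,U_k;R)\;\longrightarrow\;H^\ast\bigl(B,\,\textstyle\bigcup_i U_i;\,R\bigr),
\]
which requires the cover to be excisive. This is automatic here because the $U_i$ are open, so the argument is clean. If instead one wished to use the generalised sectional category $\secat_g$ (where the pieces $A_i$ are arbitrary subsets), one would need the equality \eqref{garcia} of Garc\'ia-Calcines, or alternatively the analogous argument with Alexander–Spanier or \v{C}ech cohomology; for the present statement, however, the open-cover formulation is exactly what the proof requires.
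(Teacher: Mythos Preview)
Your proof is correct and follows essentially the same approach as the paper's: assume $\secat(p)\le k$, use the sections to show each $u_i$ restricts trivially to $U_i$, lift to relative classes $\tilde u_i\in H^*(B,U_i;R)$, and conclude that the product lands in $H^*(B,B;R)=0$. The only difference is cosmetic---you add some commentary on excisiveness and the generalised sectional category that the paper omits, but the argument itself is identical.
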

\begin{proof} Assuming the contrary, $\secat(p)\le k$, 
let $B=U_0\cup U_1\cup\dots\cup U_k$ be an open cover with the property that each $U_i$ admits a homotopy section
$s_i: U_i\to E$. Then $u_i|_{U_i} = s_i^\ast p^\ast(u_i) =0$ and hence $u_i$ can be lifted to a relative cohomology class
$\tilde u_i\in H^\ast(B, U_i, R)$, i.e. $u_i = \tilde u_i|_B$. The product $\tilde u_0\smile \tilde u_1\smile \dots\smile \tilde u_k$ lies in the trivial group 
$H^\ast(B, \cup_{i=0}^k U_i, R) = H^\ast(B, B, R)=0$ and hence the product 
$$u_0\smile u_1\smile \dots\smile u_k= (\tilde u_0\smile \tilde u_1\smile \dots\smile \tilde u_k)|_B =0$$
vanishes which contradicts our assumptions.
\end{proof}

\section{The concept of pararmeterised topological complexity} \label{sec:first}

In this section, we give the general definition of parameterised topological complexity, and discuss some initial examples and results.
We repeat some of the constructions mentioned briefly at the end of section \ref{sec:intro}. Let $p\colon E \to B$ be a fibration. We wish to define a topological invariant measuring the complexity of motion planning algorithms in the family of configuration spaces parameterised by $p$. 
Recall that such an algorithm is a rule taking pairs of points $e_0,e_1\in E$ with $p(e_0)=p(e_1)=b\in B$ to a continuous path $\gamma\colon I \to X_b$ in the fibre $X_b=p^{-1}(b)$ with $\gamma(0)=e_0$ and $\gamma(1)=e_1$.
Denote 
%
 by $E_B^I$ the space of all continuous paths $\gamma: I\to E$ such that the path $p\circ \gamma $ is constant; these are paths lying in single fibre of $E$. Let $E\times_B E\subset E\times E$ be the space of pairs of configurations $(e, e')$ lying in the same fibre, i.e.
 $E\times_B E =\{(e, e')\in E\times E \mid p(e)=p(e')\}$. Sending a path to its endpoints defines a map
\begin{eqnarray} \label{fibration}
\Pi: \, E^I_B \to E\times_B E, \quad \gamma\mapsto (\gamma(0), \gamma(1)),
\end{eqnarray}
Clearly, $\Pi$ is a fibration with fibre $\Omega X$, the space of based loops in $X$, where $X$ is the fibre of $p:E\to B$. 
\begin{definition}\label{main}
The parameterised topological complexity $\tc[p\colon E \to B]$ of the fibration $p\colon E \to B$ is defined as the sectional category of $\Pi\colon E^I_B \to E\times_B E$,
\[
\tc[p\colon E \to B] := \secat(\Pi\colon E^I_B \to E\times_B E).
\]
Explicitly, $\tc[p\colon E \to B]$ is equal to the smallest integer $k\ge 0$ for which the space $E\times_B E$ admits an open cover
\begin{equation} \label{eq:cove}
E\times_B E = U_0\cup U_1\cup \dots \cup U_k, 
\end{equation}
and the map $\Pi\colon E^I_B \to E\times_B E$ admits a continuous section $s_i: U_i\to E^I_B$ for each $i=0, 1, \dots, k$. 

We occasionally use the more compact notation $\tc_B(X)$ for the parameterised topological complexity
\[
\tc_B(X)=\tc[p\colon E \to B] =
\secat(\Pi\colon E^I_B \to E\times_B E).
\]
\end{definition}

For a fibration $p\colon E \to B$ and a subset $B'\subset B$, let $E'=p^{-1}(B')$, and let $p'\colon E' \to B'$ denote the restricted fibration over $B'$. Then we obviously have
\begin{equation} \label{eq:restrict}
\tc[p\colon E \to B] \ge \tc[p'\colon E'\to B'],
\end{equation}
or in abbreviated notation, $$\tc_B(X) \ge \tc_{B'}(X)$$ for $B'\subset B$. In particular, if $B'=\{b\}$ is a single point, we obtain
\begin{equation} \label{eq:restrict1}
\tc_B(X) \ge \tc(X).
\end{equation}
In sections \ref{sec8}, \ref{sec9} of this paper we shall see examples where strict inequality holds in \eqref{eq:restrict1}. Moreover, we shall see that the difference $\tc_B(X)-\tc(X)$ can be arbitrarily large.

Next we consider a few examples where (\ref{eq:restrict1}) is satisfied as an equality. 

\begin{example}\label{ex42}
Suppose that $E=X\times B$ is the trivial fibration. In engineering terms it means that the external conditions are invariable (not changing). 
Then $E\times_B E= X\times X\times B$ and $E^I_B =X^I\times B$. In this case fibration \eqref{fibration} reduces to $p\times \id: X^I\times B\to X\times X \times B$ where $p\colon X^I \to X\times X$ is given by $p(\gamma)=(\gamma(0), \gamma(1))$.  Clearly the Schwarz genus of $p\times \id$ is equal to $\tc(X)$. We see that in this case 
$\tc_B(X) =\tc(X)$, i.e. trivial parametrisation does not add complexity.
\end{example}
\begin{prop}\label{prin}
Let $p\colon E\to B$ be a principal bundle with group $G$ (a connected topological group). 
Then
\begin{equation} \label{principal}
\tc_B(G) = \cat(G)=\tc(G).
\end{equation}
\end{prop}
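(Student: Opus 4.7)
The plan is to exploit the principal bundle structure to trivialize the fibration $\Pi\colon E^I_B \to E\times_B E$ as a product of $\id_E$ with the classical path-space fibration on $G$, and then invoke the known identity $\tc(G) = \cat(G)$ for connected topological groups.

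First I would set up the trivializations. Fix a right $G$-action on $E$, so that $p\colon E\to B$ is the quotient. Since $G$ acts freely and transitively on fibres, the map
\[
E\times_B E \;\longrightarrow\; E\times G,\qquad (e,e')\longmapsto (e, g(e,e')),
\]
where $g(e,e')$ is the unique element with $e\cdot g(e,e')=e'$, is a homeomorphism. Likewise, a path $\gamma\in E^I_B$ stays in a single fibre, hence can be written uniquely as $\gamma(t)=\gamma(0)\cdot h(t)$ with $h\in PG:=\{h\colon I\to G\mid h(0)=1\}$. This yields a homeomorphism $E^I_B \cong E\times PG$, and under both identifications the fibration $\Pi$ becomes the product
\[
\id_E\times \mathrm{ev}_1\colon E\times PG \longrightarrow E\times G,\qquad (e,h)\longmapsto (e,h(1)),
\]
where $\mathrm{ev}_1\colon PG\to G$ is endpoint evaluation.

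Next I would verify that $\secat(\id_E\times \mathrm{ev}_1)=\secat(\mathrm{ev}_1)$. One direction is immediate: an open cover $G=\bigcup U_i$ with sections $\sigma_i\colon U_i\to PG$ gives the cover $E\times G=\bigcup (E\times U_i)$ with sections $\id_E\times\sigma_i$. Conversely, given an open cover $E\times G=\bigcup V_i$ with sections $\tau_i$, fix any $e_0\in E$ and set $U_i:=\{g\in G\mid (e_0,g)\in V_i\}$; then $\{U_i\}$ is an open cover of $G$, and restricting $\tau_i$ to $\{e_0\}\times U_i$ and projecting to the $PG$ factor produces a section of $\mathrm{ev}_1$ over $U_i$. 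Since $\mathrm{ev}_1\colon PG\to G$ is precisely the fibration $p_0\colon P_0(G)\to G$ from the definition of Lusternik--Schnirelmann category (with basepoint $1\in G$), we get $\secat(\mathrm{ev}_1)=\cat(G)$, and therefore $\tc_B(G)=\secat(\Pi)=\cat(G)$.

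Finally, the equality $\cat(G)=\tc(G)$ for a connected topological group $G$ is a classical result of Farber (\cite{Fa03}): the shear homeomorphism $(a,b)\mapsto (a,a^{-1}b)$ of $G\times G$ identifies the path fibration $\pi\colon G^I\to G\times G$ with the product of $\id_G$ with $\mathrm{ev}_1\colon PG\to G$, so the same argument as above gives $\tc(G)=\secat(\mathrm{ev}_1)=\cat(G)$. Combining the two equalities yields \eqref{principal}. The only real content of the argument is the trivialization of $\Pi$ from the principal bundle structure; once that is in hand the rest is bookkeeping with sectional category and an appeal to the classical $H$-space identity.
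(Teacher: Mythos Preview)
Your argument is correct and follows essentially the same route as the paper: both trivialize $\Pi$ via the principal bundle structure to obtain $\id_E\times p_0\colon E\times P_0(G)\to E\times G$ and read off $\tc_B(G)=\secat(p_0)=\cat(G)$. The only cosmetic difference is in the last step: you invoke Farber's theorem $\tc(G)=\cat(G)$ directly, whereas the paper instead uses the sandwich $\tc_B(G)\ge \tc(G)\ge \cat(G)$ (from \eqref{eq:restrict1} and the general inequality $\tc\ge\cat$) together with $\tc_B(G)=\cat(G)$ to force all three to coincide.
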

Here $\cat(G)$ denotes the classical Lusternik-Schnirelmann category of $G$, see section \ref{sec3}. 
\begin{proof}
Consider the homeomorphisms 
\begin{eqnarray*}
F: E\times G \to E\times_B E, \quad F(e, g) =(e, ge),
\end{eqnarray*}
and 
$$F': E\times P_0(G) \to E_B^I, \quad F'(e, \gamma) =(t\mapsto \gamma(t)e).$$
Here $P_0(G)$ denotes the spaces of continuous paths $\gamma: I\to G$ with $\gamma(0)=1\in G$. 
Let $p_0: P_0(G)\to G$ be given by $p_0(\gamma)=\gamma(1)$. 
From the commutative diagram
$$
\xymatrix{
E\times P_0(G) \ar[r]^{F'} \ar[d]_{\id\times p_0}&  E_B^I \ar[d]^\Pi \\ 
E\times G \ar[r]^F & E\times_B E
}
$$
the sectional category of $\Pi$ is equal to the sectional category of $\id\times p_0$. The latter clearly coincides with the sectional category of 
$p_0$ which is $\cat(G)$, according to the definition, see section \ref{sec3}. This shows that $\tc_B(G)=\cat(G)$ and 
the equality (\ref{principal}) now follows from the inequalities $\tc_B(G)\ge \tc(G)\ge \cat(G)$. 
\end{proof}

\begin{example}
Consider the Hopf fibration $S^3\to S^2$ with fibre $X=S^1$. In this case $B=S^2$ and $\tc(X)=\tc(S^1)=1$. By Proposition \ref{prin}, 
$\tc_B(S^1) =1$ and we shall describe a specific motion planner.
We think of $S^3$ as being the set of pairs $(z_1, z_2)\in \C\times \C$ satisfying $|z_1|^2+|z_2|^2=1$. The group 
$S^1=\{u\in \C\mid |u|=1\}$ acts on $S^3$ by $u\cdot (z_1, z_2) = (uz_1, uz_2)$ and the base $S^2$ is the space of orbits of this action. 
We may represent the base $S^2$ as $\C\cup\{\infty\}$ and then the projection $p\colon S^3\to S^2$ is given by $p(z_1, z_2) =z_1\cdot z_2^{-1}$. 

We shall 
describe a parametrised motion planning algorithm with two open sets 
$$S^3\times_{S^2} S^3 = U_0\cup U_1.$$
Here $U_0$ will denote the set of all pairs $((z_1, z_2),(z_1', z'_2))$ satisfying $\frac{z_1}{z_2}=\frac{z'_1}{z'_2}$ and 
$(z_1, z_2)\not=(-z_1, -z_2)$. The set $U_1$ consists of all pairs $((z_1, z_2),(z_1', z'_2))$ satisfying $\frac{z_1}{z_2}=\frac{z'_1}{z'_2}$ and 
$(z_1, z_2)\not=(z_1, z_2)$. The section $s_0: U_0\to (S^3)^I_{S^2}$ can be defined by 
$$s_0((z_1, z_2),(z_1', z'_2))= (t\mapsto (e^{it\phi}z_1, e^{it\phi}z_2))$$
where $\phi\in (-\pi, \pi)$ satisfies $z'_1=e^{i\phi}z_1$ and $z'_2=e^{i\phi}z_2$. 
The section $s_1: U_1\to (S^3)^I_{S^2}$ can be defined by the formula
$$s_1((z_1, z_2),(z_1', z'_2))= (t\mapsto (e^{it\phi}z_1, e^{it\phi}z_2))$$
where $\phi\in (0, 2\pi)$ satisfies $z'_1=e^{i\phi}z_1$ and $z'_2=e^{i\phi}z_2$. 
\end{example}

\begin{prop}\label{contract}
If $p\colon E \to B$ is a fibration with path-connected fibre $X$, and $\tc_B(X)=0$, then $X$ is contractible. 
Conversely, if the fibre $X$ is contractible and $E\times_B E$ is homotopy equivalent to a CW-complex then $\tc_B(X)=0$. 
\end{prop}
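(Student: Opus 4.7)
For the forward direction, the plan is to invoke the inequality~\eqref{eq:restrict1} established just above, which gives $\tc(X) \le \tc_B(X) = 0$, hence $\tc(X)=0$. Equivalently, one can restrict a global section $s\colon E\times_B E \to E^I_B$ of $\Pi$ to $X_{b_0}\times X_{b_0}$ for some $b_0\in B$, obtaining a global section of the classical path fibration $X_{b_0}^I\to X_{b_0}\times X_{b_0}$. Either way, it remains to recall the standard fact that $\tc(Y)=0$ forces $Y$ to be contractible: evaluating a section $X^I\to X\times X$ at a fixed first coordinate yields a homotopy between $\id_X$ and a constant map. Since $X$ is homotopy equivalent to any fibre $X_{b_0}$ (the base is path-connected on each component), $X$ is contractible.

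For the converse, assume $X$ is contractible. The fibre of the Hurewicz fibration $\Pi\colon E^I_B \to E\times_B E$ is the based loop space $\Omega X$, which is also contractible. The long exact homotopy sequence therefore gives that $\Pi$ induces isomorphisms on all homotopy groups at every basepoint, and that $\Pi$ is a bijection on path components; in short, $\Pi$ is a weak homotopy equivalence. Combining this with the hypothesis that $E\times_B E$ has the homotopy type of a CW-complex, Whitehead's theorem upgrades $\Pi$ (or rather its CW replacement) to a genuine homotopy equivalence, so there exists $s_0\colon E\times_B E \to E^I_B$ with $\Pi\circ s_0\simeq \id_{E\times_B E}$.

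The final step is to rigidify $s_0$ into an honest section using the homotopy lifting property of $\Pi$. Choose a homotopy $H\colon (E\times_B E)\times I\to E\times_B E$ with $H_0=\Pi\circ s_0$ and $H_1=\id$. Since $\Pi$ is a Hurewicz fibration, $H$ lifts to $\tilde H\colon (E\times_B E)\times I \to E^I_B$ with $\tilde H_0 = s_0$ and $\Pi\circ\tilde H = H$. Then $s:=\tilde H_1$ satisfies $\Pi\circ s = \id_{E\times_B E}$, giving the desired global section and therefore $\tc_B(X)=0$. (Alternatively, one may argue by direct obstruction theory on a CW model of $E\times_B E$: the successive obstructions to extending a section across cells lie in $H^{n+1}(E\times_B E;\pi_n(\Omega X))$, which vanish since $\Omega X$ is weakly contractible.)

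The main obstacle is the converse: contractibility of the fibre only gives a homotopical statement about $\Pi$, whereas a section is a strict lift. The CW hypothesis on $E\times_B E$ is exactly what is needed to pass from the homotopical statement ($\Pi$ is a weak equivalence) to the strict statement (a section exists), either via Whitehead plus the HLP or via obstruction theory; without some hypothesis of this sort one cannot in general promote a homotopy inverse to a section.
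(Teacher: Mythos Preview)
Your proof is correct and matches the paper's approach. The paper's proof is terse: for the forward direction it invokes \eqref{eq:restrict1} and the standard characterisation of $\tc(X)=0$, exactly as you do; for the converse it simply says ``applying obstruction theory'' together with the observation that the fibre $\Omega X$ of $\Pi$ is contractible --- which is precisely your parenthetical alternative. Your primary argument for the converse (weak equivalence $+$ the Whitehead bijection $[W,E^I_B]\cong[W,E\times_B E]$ for $W$ of CW type, then rigidify via the HLP) is a valid and slightly more explicit route to the same conclusion.
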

\begin{proof}
The first statement follows from \eqref{eq:restrict1} and the known fact that $\tc(X)=0$ if and only if $X$ is contractible. The second statement follows by applying obstruction theory. The fibre of fibration \eqref{fibration} is the loop space $\Omega X$, which is contractible if $X$ is contractible. 
\end{proof}
%
%

\begin{remark}
The case of Proposition \ref{contract} when the fibres of $p: E\to B$ are convex sets is trivial, 
however a slightly more general situation when the fibres are star-like is already not obvious. 
\end{remark}

Our main Definition \ref{main} defines parametrised topological complexity using open covers of $E\times_B E$. We note that:

\begin{prop}\label{lm17}
If $p\colon E\to B$ is a locally trivial fibration and the spaces $E$ and $B$ are metrisable separable ANRs then in Definition \ref{main} instead of open covers one may use arbitrary covers of $E\times_B E$ or, equivalently, arbitrary partitions
$$E\times_B E = F_0\sqcup F_1\sqcup \dots \sqcup F_k, \quad F_i\cap F_j=\emptyset$$
admitting continuous sections $s_i: F_i\to E^I_B$ where $i=0, 1, \dots, k$. In other words, $$\tc[p:E\to B] = \secat_g(\Pi: E_B^I\to B\times_B E).$$
\end{prop}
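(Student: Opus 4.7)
The plan is to deduce the statement directly from the Garc\'ia-Calcines equality \eqref{garcia}: for a fibration between ANRs, ordinary and generalised sectional categories agree. Applied to the fibration $\Pi\colon E^I_B\to E\times_B E$, this is precisely what Proposition \ref{lm17} asserts, so the task reduces to verifying that \emph{both} $E\times_B E$ and $E^I_B$ are ANRs. The inequality $\secat_g(\Pi)\le\secat(\Pi)$ needs no hypothesis at all: given an open cover $\{U_0,\dots,U_k\}$ with sections $s_i$, the sets $F_0=U_0$, $F_i=U_i\setminus(U_0\cup\dots\cup U_{i-1})$ form a partition on which the restricted $s_i$ remain continuous sections of $\Pi$.

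To handle $E\times_B E$, I would exploit separability and metrisability of $B$ (which make $B$ Lindel\"of) together with local triviality of $p$ to extract a \emph{countable} open cover $\{U_\alpha\}$ of $B$ over which $p$ trivialises, $p^{-1}(U_\alpha)\cong U_\alpha\times X$. The subsets $V_\alpha:=p^{-1}(U_\alpha)\times_{U_\alpha}p^{-1}(U_\alpha)\cong U_\alpha\times X\times X$ are open in $E\times_B E$ and together cover it. Each $U_\alpha$ is open in the ANR $B$, hence an ANR; the fibre $X$ is a retract of the ANR $p^{-1}(U_\alpha)\cong U_\alpha\times X$ via the map $(u,x)\mapsto(u_\alpha,x)$ for a chosen basepoint $u_\alpha\in U_\alpha$, so $X$ is itself an ANR; consequently each $V_\alpha$ is a product of ANRs and hence an ANR. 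Property (3) of Section \ref{sec3} then promotes the countable union $E\times_B E=\bigcup_\alpha V_\alpha$ to an ANR.

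For $E^I_B$ the argument is parallel. The evaluation $\gamma\mapsto p(\gamma(0))$ is continuous from $E^I_B$ to $B$, and the preimages $W_\alpha$ of the $U_\alpha$ form an open cover of $E^I_B$ by sets homeomorphic to $U_\alpha\times X^I$, with $X^I$ carrying the compact-open topology. Here I would invoke the classical result that the free path space of a separable metric ANR is itself an ANR, so that each $W_\alpha$ is a product of ANRs and hence an ANR; property (3) again assembles $E^I_B=\bigcup_\alpha W_\alpha$ into an ANR. The main obstacle is this last appeal: the ANR property of $X^I$ is a nontrivial classical fact rather than something one verifies by hand. Once it is granted, both the total space and the base of $\Pi$ are ANRs, the hypotheses of \eqref{garcia} are satisfied, and the proposition follows.
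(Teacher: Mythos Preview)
Your proposal is correct and follows the same overall strategy as the paper: reduce to the Garc\'ia-Calcines equality \eqref{garcia} by verifying that both $E\times_B E$ and $E^I_B$ are ANRs. The technical implementation of that verification differs slightly. Where you assemble each space as a \emph{countable} union of open product pieces $U_\alpha\times X\times X$ (resp.\ $U_\alpha\times X^I$), invoking separability to extract a countable trivialising cover and then property (iii) of Section~\ref{sec3}, the paper instead appeals directly to a theorem of Borsuk \cite[Chapter IV, Theorem (10.5)]{Bor} stating that the total space of a locally trivial fibration with ANR base and ANR fibre is itself an ANR; this handles the global assembly in one stroke and does not explicitly use separability. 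Both routes need the classical fact that $X^I$ is an ANR when $X$ is (the paper attributes this to \cite[Chapter IV, Theorem (5.1)]{Bor}), and both obtain that the fibre $X$ is an ANR from its being a retract of an open subset of the ANR $E$. Your countable-cover argument is a perfectly valid alternative and has the minor pedagogical advantage of using only the three ANR facts already listed in Section~\ref{sec3}.
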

\begin{proof} 
Due to the result of Garc\'ia-Calcines described earlier in section 3 (see equation (\ref{garcia})), Proposition \ref{lm17} follows once we have shown that under our assumptions the spaces 
$E^I_B$ and $E\times_B E$ are ANRs. 
 
We note that the fibre $X$ of $p\colon E\to B$ is an ANR, being a neighbourhood retract of $E$ (by \cite[Chapter IV, Theorem (3.4)]{Bor}). Secondly, $X\times X$ is an ANR, by \cite[Chapter IV, Theorem (7.2)]{Bor}. And finally, applying \cite[Chapter IV, Theorem (10.5)]{Bor}, $E\times_B E$ is an ANR as it is the total space of a locally trivial fibration $E\times_B E \to B$ with fibre 
$X\times X$ obtained by pulling back the product fibration $E\times E \to B\times B$ along the diagonal $\Delta_B: B\to B\times B$. 

Next, we note that the map $E^I_B\to B$ (which sends a path $\gamma\in E^I_B$ to the constant path 
$p\circ \gamma$ in $B$) is a locally trivial fibration with fibre $X^I$. Indeed, 
if $U\subset B$ is an open subset such that $p: E\to B$ is trivial over $U$ then the fibration $E^I_B\to B$ is also trivial over $U$. Thus 
the space $E^I_B$ is the total space of a locally trivial fibration with base $B$ and fibre $X^I$. 
We observe that $X^I$ is an ANR by  \cite[Chapter IV, Theorem (5.1)]{Bor}. 
Now, since we know that both spaces $B$ and $X^I$ are ANRs we obtain that $E^I_B$ is ANR as well, by \cite[Chapter IV, Theorem (10.5)]{Bor}.
This completes the proof. 
\end{proof}

\section{Homotopy invariance}

\begin{proposition}\label{homotopy}
If fibrations $p: E\to B$ and $p':E'\to B$ are fibrewise homotopy equivalent then $\tc[p:E\to B]=\tc[p':E'\to B]$. 
\end{proposition}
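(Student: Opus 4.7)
The plan is to exploit the fibrewise homotopy equivalence directly: a fibrewise homotopy equivalence gives us maps $f\colon E\to E'$ and $g\colon E'\to E$ over $B$, together with fibrewise homotopies $H\colon gf\simeq_B \id_E$ and $H'\colon fg\simeq_B \id_{E'}$. These induce continuous maps
\[
F:=f\times_B f\colon E\times_B E\to E'\times_B E', \qquad f_\#\colon E^I_B\to (E')^I_B,\ \gamma\mapsto f\circ \gamma,
\]
and analogously $G$ and $g_\#$, fitting into commutative squares with $\Pi$ and $\Pi'$. The strategy is to show $\tc[p']\ge \tc[p]$ by pulling back a sectional cover of $\Pi'$ along $F$ and repairing the failure of $F$ to be an honest isomorphism using the fibrewise homotopy $H$. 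The reverse inequality then follows by symmetry.

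Concretely, suppose $\tc[p'\colon E'\to B]\le k$ and choose an open cover $E'\times_B E'=U'_0\cup\cdots\cup U'_k$ with continuous sections $s'_i\colon U'_i\to (E')^I_B$ of $\Pi'$. Set $U_i:=F^{-1}(U'_i)$; these form an open cover of $E\times_B E$. For $(e_0,e_1)\in U_i$, I define $s_i(e_0,e_1)\in E^I_B$ as the concatenation of three paths, all contained in the fibre $X_{p(e_0)}=X_{p(e_1)}$:
\begin{enumerate}
\item the fibrewise homotopy track $t\mapsto H(e_0,t)$ from $e_0$ to $gf(e_0)$;
\item the composite $g_\#\bigl(s'_i(F(e_0,e_1))\bigr)$, a path from $gf(e_0)$ to $gf(e_1)$;
\item the reversed fibrewise homotopy track $t\mapsto H(e_1,1-t)$ from $gf(e_1)$ back to $e_1$.
\end{enumerate}
Because $H$ is fibrewise, $p\circ H(e,t)=p(e)$ for all $t$, so each of the three pieces lies in a single fibre of $p$; likewise $g_\#$ preserves fibres since $g$ is over $B$. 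Thus the concatenation lies in $E^I_B$ and manifestly has endpoints $(e_0,e_1)$, so $\Pi\circ s_i=\id_{U_i}$.

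The remaining point is continuity of $s_i$, which reduces to continuity of concatenation on path spaces with the compact-open topology together with continuity of each of the three constituent maps; this is standard. Hence $\tc[p\colon E\to B]\le k$, giving $\tc[p]\le\tc[p']$, and the reverse inequality follows by swapping the roles of $p$ and $p'$ using $g$ and $H'$. The main obstacle is essentially bookkeeping: one must keep track of the fact that every construction respects fibres, so nothing escapes the fibre $X_{p(e_0)}$ during the concatenation, and one must be slightly careful that the open sets $U_i$ form a cover (which is automatic since $F$ is a continuous surjection on the fibre product).
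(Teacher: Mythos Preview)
Your proof is correct and is essentially the same as the paper's. The paper isolates the one-sided inequality as a separate proposition (a domination statement: if $g\circ f\simeq_B 1_{E'}$ then $\tc[p']\le\tc[p]$) and proves it via exactly the three-piece concatenation you describe; Proposition~\ref{homotopy} then follows by applying this twice. One minor remark: your final parenthetical is unnecessary---the $U_i=F^{-1}(U'_i)$ cover $E\times_B E$ simply because $F$ is defined on all of $E\times_B E$, not because $F$ is surjective.
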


Proposition \ref{homotopy} follows from the following observation: 

\begin{proposition} Consider the commutative diagram 
\[
\xymatrix{
E' \ar[dr]_{p'}\ar[rr]^{f}  && E \ar@<1ex>[dl]^{p}\ar@<1ex>[ll]^g \\
&B   && 
}
\]
where $p, p'$ are fibrations, $p'=pf$, $p=p' g$ and $g\circ f: E'\to E'$ is fibrewise homotopic to the identity map $1_{E'}: E'\to E'$. Then 
$\tc[p': E'\to B]\le \tc[p:E\to B]$. 
\end{proposition}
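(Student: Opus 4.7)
The plan is to pull back a good cover of $E\times_B E$ to $E'\times_B E'$ via the map $f$, and then use the fibrewise homotopy $g\circ f \simeq_B 1_{E'}$ to convert sections over $E$ into sections over $E'$.

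Concretely, set $k=\tc[p\colon E\to B]$ and choose an open cover $E\times_B E = U_0\cup\dots\cup U_k$ together with sections $s_i\colon U_i \to E^I_B$ of $\Pi$. The map $f$ induces a continuous map $f\times_B f\colon E'\times_B E' \to E\times_B E$ (well-defined because $p'=pf$), so the preimages $V_i=(f\times_B f)^{-1}(U_i)$ form an open cover of $E'\times_B E'$. My goal is to produce continuous sections $\sigma_i\colon V_i \to (E')^I_B$ of $\Pi'$.

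For $(e_0',e_1')\in V_i$, I have a path $\gamma=s_i(f(e_0'),f(e_1'))$ in $E$ lying in a single fibre of $p$, connecting $f(e_0')$ to $f(e_1')$. Pushing forward gives a path $g\circ\gamma$ in $E'$ from $g(f(e_0'))$ to $g(f(e_1'))$ which lies in a single fibre of $p'$ because $p'\circ g = p$. The endpoints of $g\circ\gamma$ are not the desired $e_0',e_1'$, but by hypothesis there is a fibrewise homotopy $H\colon E'\times I \to E'$ with $H_0 = 1_{E'}$, $H_1 = g\circ f$, and $p'\circ H_t = p'$ for every $t$. I then define $\sigma_i(e_0',e_1')$ to be the concatenation of three paths in $E'$: first $t\mapsto H(e_0',t)$ from $e_0'$ to $g(f(e_0'))$, then $g\circ\gamma$, then $t\mapsto H(e_1',1-t)$ from $g(f(e_1'))$ back to $e_1'$. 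All three segments lie in a single fibre of $p'$ (the outer two because $H$ is fibrewise, the middle one because $p'\circ g\circ\gamma = p\circ\gamma$ is constant), and the fibres agree because $(e_0',e_1')\in E'\times_B E'$ implies $p'(e_0')=p'(e_1')$.

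Continuity of $\sigma_i$ is straightforward: it is built by concatenation from the continuous data $s_i$, $f$, $g$, and $H$, and concatenation is continuous on $(E')^I_B$ with the compact-open topology. By construction $\Pi'\circ\sigma_i = \id_{V_i}$. Hence the $V_i$ form an open cover of $E'\times_B E'$ admitting sections of $\Pi'$, which shows $\secat(\Pi')\le k$, i.e. $\tc[p'\colon E'\to B] \le \tc[p\colon E\to B]$.

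The only subtle point worth emphasising is that the hypothesis $g\circ f \simeq_B 1_{E'}$ is exactly what is needed; without the \emph{fibrewise} nature of the homotopy the correction paths $H(e_j',\cdot)$ would leave the fibre of $p'$ and the concatenation would no longer be a lift to $(E')^I_B$. Apart from this, the argument is a straightforward transfer-of-sections construction and no further obstruction arises.
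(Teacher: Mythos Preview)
Your argument is correct and is essentially identical to the paper's proof: both pull back the open cover along $f\times_B f$ and build the new section as the three-fold concatenation $H(e_0',\cdot)\ast (g\circ s_i(f(e_0'),f(e_1')))\ast H(e_1',1-\cdot)$, the paper simply writing this out as an explicit piecewise formula in the parameter $\tau\in[0,1]$.
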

\begin{proof}
Let $h_t: E'\to E'$ be a homotopy satisfying $p' \circ h_t = p'$ and $h_0=1_{E'}$ while $h_1=g\circ f$. Let $U\subset E\times_B E$ be a subset with a continuous section $s: U\to E^I_B$ of $\Pi$. Consider the set $V=(f\times f)^{-1}(U)\subset E'\times_B E'$. We want to describe a continuous section $s': V\to (E')^I_B$ of the fibration $\Pi': (E')^I_B \to E'\times_B E'$. For a pair $(a, b)\in V$ and $\tau\in [0,1]$
we set
$$
s'(a, b)(\tau)= \left\{
\begin{array}{lll}
h_{3\tau}(a), &\mbox{for} & 0\le \tau\le 1/3,\\ \\

g(s(f(a),f(b))(3\tau-1)),&\mbox{for} & 1/3\le \tau\le 2/3,\\ \\

h_{3(1-\tau)}(b), & \mbox{for} & 2/3\le \tau \le 1.
\end{array}
\right.
$$
This proves our statement. 
\end{proof}

\section{Product Inequality}
In this section we establish a product inequality for parameterised topological complexity. 

\begin{prop} \label{prop:product}
Let $p'\colon E' \to B'$ and $p''\colon E'' \to B''$ be fibrations with path-connected fibres $X'$ and $X''$ respectively. Assume that the spaces $E', B', E'', B''$ are metrisable. 
Consider the product fibration $p\colon E \to B$, with fibre $X$, where $B=B'\times B''$, $E=E'\times E''$, $X=X'\times X''$ and $p=p'\times p''$. Then, 
\[
\tc[p\colon E \to B] \le \tc[p'\colon E' \to B']  + \tc[p''\colon E'' \to B''].
\]
Equivalently, in abbreviated notation, $$\tc_{B'\times B''}(X'\times X'') \le \tc_{B'}(X')  + \tc_{B''}(X'').$$
\end{prop}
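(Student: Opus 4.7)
The plan is to identify the fibration $\Pi\colon E^I_B \to E\times_B E$ whose sectional category computes $\tc[p\colon E\to B]$ with the Cartesian product of the analogous fibrations $\Pi'$ and $\Pi''$ attached to $p'$ and $p''$, and then invoke the classical product inequality for Schwarz genus. A point of $E\times_B E$ is a pair $((e'_1,e''_1),(e'_2,e''_2))$ satisfying $p'(e'_1)=p'(e'_2)$ and $p''(e''_1)=p''(e''_2)$; rearranging coordinates gives a natural homeomorphism
$$E\times_B E \ \xrightarrow{\cong}\ (E'\times_{B'} E')\times (E''\times_{B''} E'').$$
Likewise, a path $\gamma = (\gamma',\gamma'')$ in $E = E'\times E''$ projects to a constant path in $B=B'\times B''$ if and only if $\gamma'$ stays in a single fibre of $p'$ and $\gamma''$ in a single fibre of $p''$, yielding
$$E^I_B \ \xrightarrow{\cong}\ (E')^I_{B'} \times (E'')^I_{B''}.$$
Under these identifications $\Pi$ corresponds to $\Pi'\times\Pi''$, and Definition \ref{main} identifies $\tc[p\colon E\to B]$ with $\secat(\Pi'\times\Pi'')$.

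The inequality now reduces to the classical product bound for Schwarz genus, $\secat(q_1\times q_2)\le \secat(q_1)+\secat(q_2)$, applied to $q_1=\Pi'$ and $q_2=\Pi''$. To execute this concretely, set $m=\tc[p']$ and $n=\tc[p'']$ and pick open covers $E'\times_{B'} E'=U_0\cup\cdots\cup U_m$ and $E''\times_{B''} E''=V_0\cup\cdots\cup V_n$ together with continuous local sections $\sigma_i\colon U_i\to (E')^I_{B'}$ of $\Pi'$ and $\tau_j\colon V_j\to (E'')^I_{B''}$ of $\Pi''$. The $(m+1)(n+1)$ products $U_i\times V_j$ then cover $E\times_B E$, each carrying the continuous product section $\sigma_i\times\tau_j$ of $\Pi$, and the task is to regroup these into $m+n+1$ open sets $W_0,\ldots,W_{m+n}$, where the index pairs $(i,j)$ with $i+j=k$ are pooled into $W_k$.

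The main obstacle is arranging this regrouping so that the section patched from the $\sigma_i\times\tau_j$ on each $W_k$ is continuous. The standard device is to use a partition of unity $\{\lambda_i\mu_j\}$ subordinate to $\{U_i\times V_j\}$ (available because the metrisability hypothesis makes $E\times_B E$ paracompact) and, for each $k$, set
$$W_{ij}=\bigl\{(x,y)\in U_i\times V_j \ \bigl|\ \lambda_i(x)\mu_j(y) > \lambda_{i'}(x)\mu_{j'}(y)\ \text{for all}\ (i',j')\neq (i,j)\ \text{with}\ i'+j'=k\bigr\},$$
and $W_k=\bigcup_{i+j=k}W_{ij}$. The strict inequality forces the pieces $W_{ij}$ inside $W_k$ to be pairwise disjoint open subsets of $E\times_B E$, hence clopen in $W_k$ with the subspace topology, so the product sections $\sigma_i\times\tau_j$ glue to a continuous section of $\Pi$ on $W_k$; a small tie-breaking perturbation of the partition functions ensures that the $W_k$'s exhaust $E\times_B E$. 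This produces the required open cover of $E\times_B E$ by $m+n+1$ sets with sections, whence $\tc[p\colon E\to B]\le m+n$, as claimed.
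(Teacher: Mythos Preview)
Your proof is correct and follows the same approach as the paper: identify $\Pi$ with the product fibration $\Pi'\times\Pi''$ via the natural homeomorphisms $E\times_B E\cong (E'\times_{B'}E')\times(E''\times_{B''}E'')$ and $E^I_B\cong (E')^I_{B'}\times(E'')^I_{B''}$, then invoke the product inequality for sectional category. The only difference is that the paper simply cites this last inequality as \cite[Proposition 22]{Sch}, whereas you spell out the standard partition-of-unity regrouping argument; your tie-breaking remark is a little informal, but the result being invoked is classical and the argument is sound.
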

\begin{proof}
Let $\Pi\colon E^I_B \to E\times_B E$, $\Pi'\colon (E')^I_{B'} \to E'\times_{B'} E'$, $\Pi''\colon (E'')^I_{B''} \to E''\times_{B''} E''$ denote the fibrations of \eqref{fibration}. It is readily checked that $E^I_B=(E')^I_{B'} \times (E'')^I_{B''}$, 
$E\times_B E=(E'\times_{B'}E')\times(E''\times_{B''}E'')$, and that the fibration $\Pi\colon E^I_B \to E\times_B E$ is equivalent to the product fibration
\[
\Pi' \times \Pi'' \colon (E')^I_{B'} \times (E'')^I_{B''} \to (E'\times_{B'} E')\times (E''\times_{B''} E'').
\]
Since the sectional category of the product fibration is at most the sum of the sectional categories of the constituent fibrations (see \cite[Proposition 22]{Sch}), the result follows.
\end{proof}

\begin{cor}
Let $p': E'\to B$ and $p'': E''\to B$ be fibrations over base $B$ with fibres $X'$ and $X''$ respectively. 
Assume that the spaces $E', E'', B$ are metrisable. Let $p\colon E \to B$ be the fibration with fibre $X=X'\times X''$, where $E=E'\times_B E''$ and $p=p'\times_B p''$. Then,
\[
\tc[p\colon E \to B] \le \tc[p'\colon E' \to B]  + \tc[p''\colon E'' \to B].
\]
Equivalently, in abbreviated notation, $$\tc_B(X'\times X'') \le \tc_{B}(X')  + \tc_{B}(X'').$$
\end{cor}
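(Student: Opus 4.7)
The plan is to derive this corollary from Proposition \ref{prop:product} by viewing $p\colon E\to B$ as the pullback of the external product $p'\times p''$ along the diagonal $\Delta_B\colon B\to B\times B$.

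First, I apply Proposition \ref{prop:product} to the product fibration $p'\times p''\colon E'\times E''\to B\times B$ (whose fibre is $X'\times X''$) to obtain
\[
\tc[p'\times p''\colon E'\times E''\to B\times B] \le \tc[p'\colon E'\to B] + \tc[p''\colon E''\to B].
\]
This reduces matters to proving the pullback-monotonicity inequality $\tc_B(X'\times X'')\le \tc_{B\times B}(X'\times X'')$.

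To do this, I identify $E\times_B E$ with a subspace of $(E'\times E'')\times_{B\times B}(E'\times E'')=(E'\times_B E')\times(E''\times_B E'')$; a direct check shows it is precisely the locus where $p'(e'_0)=p''(e''_0)$, i.e.\ the pullback along $\Delta_B$. Given an open cover $(E'\times E'')\times_{B\times B}(E'\times E'')=U_0\cup\dots\cup U_k$ with sections $\sigma_i$ of the associated fibration $\Pi'\times\Pi''$, the sets $V_i:=U_i\cap(E\times_B E)$ form an open cover of $E\times_B E$. The key verification is that each $\sigma_i$ restricts to a section of $\Pi\colon E^I_B\to E\times_B E$ over $V_i$: a path $\gamma=(\gamma',\gamma'')$ in the image of $\sigma_i$ has $p'\circ\gamma'$ and $p''\circ\gamma''$ each constant, and at the endpoints its image lies in $E\times_B E$, so $p'(\gamma'(0))=p''(\gamma''(0))$; by constancy this equality propagates to all $t\in I$, so $\gamma$ lies in $E^I_B$. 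Combining the two inequalities yields the claim.

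The only obstacle worth noting is the bookkeeping with the various fibered products; once the identification of $E\times_B E$ with the diagonal pullback of $(E'\times_B E')\times(E''\times_B E'')$ is unravelled, the monotonicity step is a routine restriction argument. Alternatively, one could bypass the intermediate step and argue directly by combining covers $\{U'_i\}$ of $E'\times_B E'$ and $\{U''_j\}$ of $E''\times_B E''$ into $(k+\ell+1)$ open sets indexed by $i+j$ after suitable thickening, but the route through Proposition \ref{prop:product} is cleaner and exploits machinery already in place.
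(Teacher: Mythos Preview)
Your proof is correct and follows essentially the same route as the paper: apply Proposition \ref{prop:product} to the external product $p'\times p''$ over $B\times B$, then restrict along the diagonal $\Delta_B$. The paper simply invokes the already-established restriction inequality \eqref{eq:restrict} for this second step, whereas you spell out explicitly how sections of $\Pi'\times\Pi''$ restrict to sections of $\Pi$; your verification that the resulting paths land in $E^I_B$ (using constancy of $p'\circ\gamma'$ and $p''\circ\gamma''$ together with the diagonal condition at the endpoints) is exactly what underlies \eqref{eq:restrict} in this situation.
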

\begin{proof}
Identify $B$ with its image under the diagonal $\Delta\colon B\to B\times B$ in the base of the fibration $p'\times p''\colon E'\times E''\to B \times B$. The result then follows from Proposition \ref{prop:product} by applying inequality \eqref{eq:restrict}.
\end{proof}

\section{Upper and lower bounds}

Let $X$ denote the fibre of a fibration $p:E\to B$. We note 
the following obvious inequality:
\begin{eqnarray}\label{upper1}
\tc_B(X) \, \le \, \cat(E\times_B E).
\end{eqnarray}
Let $\dim(Y)$ denote the covering dimension of a topological space $Y$. We shall also use the notation $\hdim (Y)$ for the minimum 
dimension $\dim Z$ where $Z$ is a space homotopy equivalent to $Y$. 
We shall refer to $\hdim(Y)$ as to {\it homotopical dimension} of $Y$. 
Using the known properties of the Lusternik-Schnirelmann category we can write
\begin{eqnarray}\label{upper2}
\tc_B(X) \le \hdim(E\times_B E),
\end{eqnarray}
as follows from (\ref{upper1}). 

\begin{prop} \label{lem:upper}
Let $p\colon E\to B$ be a locally trivial fibration with path-connected fibre $X$. Assume that the topological spaces $E$ and $B$ are metrisable. Then one has 
\[
\tc_B(X) \, = \,\tc[p\colon E\to B] \le 2 \dim (X) +\dim(B).
\]
\end{prop}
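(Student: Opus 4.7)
My plan is to deduce the estimate from the already-established inequality $\tc_B(X)\le \hdim(E\times_B E)$ recorded in \eqref{upper2}, combined with a dimension-theoretic bound on $E\times_B E$.

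First, I would observe that $\hdim(Y)\le \dim(Y)$ for any topological space $Y$, since the identity map witnesses $Y$ as homotopy equivalent to itself. Consequently it suffices to prove the purely dimension-theoretic inequality
\[
\dim(E\times_B E)\le 2\dim(X)+\dim(B).
\]

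To establish this I would exploit the local triviality of $p$. For every $b\in B$ there is an open neighbourhood $U\subset B$ of $b$ over which $p$ is trivial, so that $p^{-1}(U)\cong X\times U$ and therefore
\[
(E\times_B E)|_U \cong X\times X\times U.
\]
By the classical product inequality for covering dimension in the metrisable category,
\[
\dim\bigl((E\times_B E)|_U\bigr) \le 2\dim(X)+\dim(U) \le 2\dim(X)+\dim(B).
\]
Since $E\times_B E$ is metrisable (as a subspace of the metrisable product $E\times E$) and every point admits such a neighbourhood, the local-to-global principle for covering dimension of metrisable spaces upgrades these local estimates to the global bound $\dim(E\times_B E)\le 2\dim(X)+\dim(B)$. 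Chaining this with \eqref{upper2} and $\hdim\le\dim$ gives the desired inequality $\tc_B(X)\le 2\dim(X)+\dim(B)$.

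The only subtle step I foresee is the appeal to dimension theory in the metrisable (possibly non-separable) setting: both the product inequality $\dim(A\times C)\le \dim A+\dim C$ and the fact that covering dimension for metrisable spaces is determined locally are standard but genuinely nontrivial results. I would cite them from a reference such as Engelking's treatise on dimension theory rather than reprove them here. A more constructive alternative—partitioning $B$ via Ostrand's theorem and $X\times X$ via the bound $\tc(X)\le 2\dim(X)$—would only yield the product $(2\dim(X)+1)(\dim(B)+1)$ of pieces, so it does not give the sharp additive bound; this is why the route through $\cat(E\times_B E)\le \hdim(E\times_B E)$ is essential.
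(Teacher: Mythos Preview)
Your argument is correct and follows essentially the same route as the paper: both reduce the problem to the dimension-theoretic bound $\dim(E\times_B E)\le 2\dim(X)+\dim(B)$, obtained from the fact that $E\times_B E\to B$ is locally trivial with fibre $X\times X$ together with the product inequality for covering dimension (the paper packages this as a citation to Engelking). The only cosmetic difference is that the paper invokes Schwarz's bound $\secat(\Pi)\le\dim(E\times_B E)$ directly, whereas you pass through \eqref{upper2} and the trivial inequality $\hdim\le\dim$; these are equivalent.
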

\begin{proof}
We apply the general upper bound for the sectional category \cite{Sch} in terms of the dimension of the base,
$\tc_B(X) \le \dim (E\times_B E)$. Next we note that $E\times_B E$ is the total space of a locally trivial fibration over $B$ with fibre $X\times X$ implying
$\dim(E\times_B E) \le  \dim(X\times X) +\dim B\le 2 \dim X + \dim B$, cf. \cite[Chapter 7]{E}.
\end{proof}

The upper bound of Proposition \ref{lem:upper} can be improved under connectivity assumptions on the fibre $X$ of $p\colon E\to B$:

\begin{prop} \label{lem:upper2}
Let $p\colon E\to B$ be a locally trivial fibration with fibre $X$ where the spaces $E, B, X$ are CW-complexes. 
Assume that the fibre $X$ is $r$-connected. Then 
\[
\tc_B(X)  \, < \frac{\hdim(E\times_B E) +1}{r+1}
\, \le \,  \frac{2\dim X + \dim B +1}{r+1}. 
\]
\end{prop}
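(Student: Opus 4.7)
The plan is to derive the inequality as a direct application of the classical Schwarz bound on sectional category to the parametrised path fibration $\Pi\colon E_B^I \to E\times_B E$ of \eqref{fibration}. The fibre of $\Pi$ is the based loop space $\Omega X$, and since $\pi_i(\Omega X) \cong \pi_{i+1}(X)$, the hypothesis that $X$ is $r$-connected implies that $\Omega X$ is $(r-1)$-connected. This is exactly the connectivity needed to apply Schwarz's sharp bound on sectional category: for a fibration over a CW base $Z$ of dimension $k$ with $(s-1)$-connected fibre, one has $\secat < (k+1)/(s+1)$. This is the form of Schwarz's theorem used in Farber's proof of the analogous bound for ordinary $\tc$. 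Taking $s = r$ and $Z = E\times_B E$ would immediately produce
\[
\tc_B(X) \,=\, \secat(\Pi) \,<\, \frac{\dim(E\times_B E) + 1}{r+1}.
\]

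To upgrade $\dim$ to $\hdim$, I would invoke homotopy invariance of sectional category. Concretely, choose a CW complex $Z$ of minimal dimension $\hdim(E\times_B E)$ in the homotopy type of $E\times_B E$ together with a homotopy equivalence $f\colon Z \to E\times_B E$, and pull the fibration $\Pi$ back along $f$. Using Proposition \ref{homotopy}, together with the standard fact that fibrewise homotopy type of a Hurewicz fibration is preserved under pullback along a homotopy equivalence of the base, one has $\secat(f^*\Pi) = \secat(\Pi)$. Schwarz applied to $f^*\Pi$ then gives the first claimed inequality with $\hdim$ in place of $\dim$.

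The second inequality is immediate from the dimension estimate already carried out in the proof of Proposition \ref{lem:upper}: since $E\times_B E$ is the total space of a locally trivial fibration over $B$ with fibre $X\times X$, one has $\dim(E\times_B E) \le 2\dim X + \dim B$, and trivially $\hdim(Y)\le\dim(Y)$ for any CW space.

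The main (and essentially only) delicate point is the homotopy-invariance passage from $\dim$ to $\hdim$: one needs $E\times_B E$ to admit a CW model realising its minimal homotopical dimension, and one must verify that pulling $\Pi$ back along the homotopy equivalence $f$ preserves the sectional category. Both are standard in this setting, but they are what carry the argument beyond a naive quotation of Schwarz's bound.
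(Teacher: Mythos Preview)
Your proof is correct and follows essentially the same route as the paper: both observe that the fibre of $\Pi$ is $\Omega X$, which is $(r-1)$-connected since $X$ is $r$-connected, and then invoke Schwarz's connectivity bound \cite[Theorem~5]{Sch} for sectional category. The paper's proof is simply terser, citing Schwarz directly without spelling out the passage from $\dim$ to $\hdim$ or the second inequality; your more explicit treatment of the $\hdim$ step via pullback along a homotopy equivalence is a reasonable elaboration (though note that Proposition~\ref{homotopy} concerns fibrewise homotopy equivalence of the original bundle $p$, not homotopy invariance of $\secat(\Pi)$ in the base---the fact you actually need is the standard homotopy invariance of sectional category).
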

\begin{proof} The fibre of the fibration $\Pi: E^I_B\to E\times_B E$ (see \eqref{fibration}) is $\Omega X$; it is $(r-1)$-connected. 
The asserted upper bound follows by applying 
\cite[Theorem 5]{Sch}. Note that the left inequality is strict and not "less or equal". 
\end{proof} 



\begin{prop} \label{cup} Let $p\colon E \to B$ be a fibration with path-connected fibre. Consider the diagonal map $\Delta\colon E \to E\times_B E$, where $\Delta(e)=(e,e)$. Then the parameterised topological complexity $\tc[p\colon E\to B]=\tc_B(X)$ is greater than or equal to the cup length of the kernel 
$\ker[\Delta^\ast: H^\ast(E\times_B E; R) \to H^\ast(E;  R)]$ where $R$ is an arbitrary coefficient ring. 
In other words, if for some cohomology classes
$u_1, \dots, u_k\in H^\ast(E\times_B E;R)$ satisfying 
$$\Delta^\ast(u_i)=0, \quad i=1, \dots, k$$
the cup-product
$$u_1\smile u_2\smile\dots \smile u_k \not=0\, \in H^\ast(E\times_B E;R)$$
is nonzero 
then $\tc_B(X)\ge k$. 
\end{prop}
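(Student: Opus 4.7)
\medskip

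\noindent\textbf{Proof plan.} The natural strategy is to reduce this to the general cohomological lower bound for sectional category given in Proposition \ref{prop:prop03}, applied to the fibration $\Pi\colon E^I_B \to E\times_B E$ of \eqref{fibration}. Since $\tc_B(X) = \secat(\Pi)$, it will suffice to prove that for any cohomology class $u \in H^\ast(E\times_B E; R)$,
\[
\Pi^\ast(u) = 0 \quad \Longleftrightarrow \quad \Delta^\ast(u) = 0.
\]
Once this identification of kernels is established, Proposition \ref{prop:prop03} (applied with $k$ classes whose cup product is nonzero and all of which lie in $\ker \Pi^\ast$) yields $\secat(\Pi) \ge k$, which is exactly the asserted inequality.

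\medskip

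\noindent The key step is to exhibit a homotopy equivalence $j\colon E \to E^I_B$ through which $\Pi$ factors as $\Delta$. Define $j$ to be the inclusion of constant paths: $j(e)$ is the constant path at $e$. This map lands in $E^I_B$, since a constant path is in particular fibrewise. A strong deformation retraction $r\colon E^I_B \to E$ is given by evaluation at $0$, $r(\gamma)=\gamma(0)$; clearly $r\circ j = \id_E$, and the homotopy
\[
H\colon E^I_B \times I \to E^I_B, \qquad H(\gamma,s)(t) = \gamma(st),
\]
joins $\id_{E^I_B}$ to $j\circ r$ while remaining in $E^I_B$ (the composition $p\circ H(\gamma,s)$ is constant because $p\circ\gamma$ is). Thus $j$ is a homotopy equivalence, and directly from the definitions $\Pi\circ j = \Delta$. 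Taking cohomology yields $\Delta^\ast = j^\ast \circ \Pi^\ast$; since $j^\ast$ is an isomorphism, $\ker \Pi^\ast = \ker \Delta^\ast$, as required.

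\medskip

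\noindent With this in hand, the proof finishes quickly: given classes $u_1,\dots,u_k \in H^\ast(E\times_B E; R)$ with $\Delta^\ast(u_i)=0$ and $u_1\smile\cdots\smile u_k \ne 0$, we have $\Pi^\ast(u_i)=0$ for each $i$, and Proposition \ref{prop:prop03} gives $\tc_B(X) = \secat(\Pi) \ge k$. No real obstacle is anticipated; the only point requiring care is verifying that the deformation $H$ stays inside $E^I_B$ (i.e., that $p\circ H(\gamma,s)$ is still a constant path), which is immediate from the definition of $E^I_B$.
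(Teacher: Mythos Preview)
Your proof is correct and follows essentially the same approach as the paper: both observe that the constant-path map $E\to E^I_B$ is a homotopy equivalence satisfying $\Pi\circ j=\Delta$, deduce $\ker\Pi^\ast=\ker\Delta^\ast$, and then invoke Proposition~\ref{prop:prop03}. Your version is slightly more explicit in verifying the homotopy equivalence via the deformation $H(\gamma,s)(t)=\gamma(st)$, whereas the paper simply asserts it.
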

\begin{proof}
Consider the commutative diagram
\[
\xymatrix{
E \ar[dr]_{\Delta}\ar[rr]^c  && E^I_B \ar[dl]^{\Pi}\\
&E\times_B E  && 
}
\]
where the map $c: E \to E^I_B$ associates with each $e\in E$ the constant path $c(e)(t)=e$. Clearly $c$ is a homotopy equivalence, 
and $\Pi\circ c =\Delta$.   Therefore,  the kernel of $\Pi^\ast: H^\ast(E\times_B E;R)\to H^\ast(E^I_B;\Pi^*R)$ coincides with 
\begin{eqnarray}\label{delta1}
\ker[\Delta^\ast: H^\ast(E\times_B E;R)\to H^\ast(E;\Delta^*R)].
\end{eqnarray}
Our claim now follows from Proposition \ref{prop:prop03}. 
\end{proof}

Our next goal is to relate the kernel (\ref{delta1}) with the ideal of the zero-divisors of the fibre $X$, i.e. with 
\begin{eqnarray}\label{delta2}
\ker[\smile: H^\ast(X;R) \otimes H^\ast(X;R) \to H^\ast(X; R)].
\end{eqnarray}
Recall the classical Leray-Hirsch theorem \cite[Theorem 4D.1]{Hatcher}. 
Let $p\colon E \to B$ be a locally trivial fibration with fibre $X$ and let
$R$ be a commutative coefficient ring (typically $\Z$ or a field). Assume that:
\begin{enumerate} \label{enum:LH}
\item[(a)] \label{itm:LHa}
For each $q\ge 0$, the cohomology $H^q(X;R)$ is a free finitely generated $R$-module.
\item[(b)] \label{itm:LHb}
There exist cohomology classes $c_j \in H^*(E;R)$ such that their restrictions $\iota^*(c_j)$ form a free basis of $H^*(X;R)$ for each fibre $X$, where $\iota\colon X \to E$ denotes the inclusion.
\end{enumerate}
The Leray-Hirsch theorem states that under these hypotheses  the cohomology of the total space $H^*(E;R)$ is a free $H^*(B;R)$-module with basis $\{c_j\}$. Explicitly, the map
\begin{equation} \label{eq:LH}
\Phi\colon H^*(B;R) \otimes_R H^*(X;R) \to H^*(E;R),\quad \sum b_i \otimes \iota^*(c_j) \mapsto
\sum p^*(b_i)\smile c_j,
\end{equation}
is an isomorphism.

%

\begin{proposition} \label{prop:zero}
If a locally trivial fibration $p: E\to B$ with fibre $X$ satisfies the Leray-Hirsch theorem and the cohomology of the base $H^\ast(B;R)$ is flat as an $R$-module then the kernel 
$\ker[\Delta^\ast: H^\ast(E\times_B E;R)\to H^\ast(E;\Delta^*R)]$
is isomorphic, as an $H^\ast(B)$-module, to 
\begin{eqnarray}\label{delta3}
H^\ast(B)\otimes_R \ker[\smile: H^\ast(X;R) \otimes H^\ast(X;R) \to H^\ast(X; R)].
\end{eqnarray}
\end{proposition}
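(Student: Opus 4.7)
The plan is to apply the Leray-Hirsch theorem to both $p\colon E \to B$ and the associated pullback $p_{EE}\colon E\times_B E \to B$ (whose fibre is $X \times X$), and under the resulting $H^*(B)$-module isomorphisms to identify $\Delta^*$ with the map $\mathrm{id}_{H^*(B)} \otimes \smile$.

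First I would set up the two Leray-Hirsch isomorphisms. Let $\{c_j\} \subset H^*(E;R)$ be the Leray-Hirsch classes, so that their restrictions $\bar c_j = \iota^*(c_j)$ form a free $R$-basis of $H^*(X;R)$; Leray-Hirsch then gives $\Phi_E \colon H^*(B) \otimes H^*(X) \to H^*(E)$, $\beta \otimes \bar c_j \mapsto p^*(\beta) \smile c_j$, as an $H^*(B)$-module isomorphism. Next, $p_{EE}$ is locally trivial with fibre $X \times X$, and since $H^*(X;R)$ is $R$-free, Künneth identifies $H^*(X \times X;R)$ with $H^*(X;R) \otimes H^*(X;R)$. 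With $\pi_1,\pi_2\colon E\times_B E \to E$ the projections, the products $\pi_1^*(c_j)\smile\pi_2^*(c_k)$ restrict on each fibre to $\bar c_j \times \bar c_k$, so Leray-Hirsch supplies a second $H^*(B)$-module isomorphism
\[
\Phi_{EE}\colon H^*(B) \otimes H^*(X) \otimes H^*(X) \xrightarrow{\ \cong\ } H^*(E\times_B E).
\]

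Next I would identify $\Delta^*$ through the two $\Phi$'s. The diagonal $\Delta\colon E \to E\times_B E$ is a map of fibrations over $B$ which restricts on each fibre to $\Delta_X\colon X \to X\times X$; under Künneth, $\Delta_X^* = \smile$. Combining $H^*(B)$-linearity of $\Delta^*$ with naturality of Leray-Hirsch for fibrewise maps yields $\Phi_E^{-1} \circ \Delta^* \circ \Phi_{EE} = \mathrm{id}_{H^*(B)} \otimes \smile$. Finally, since $H^*(B;R)$ is flat over $R$, the functor $H^*(B;R) \otimes_R -$ is exact, so applying it to $0 \to \ker(\smile) \to H^*(X) \otimes H^*(X) \xrightarrow{\smile} H^*(X)$ gives $\ker(\mathrm{id} \otimes \smile) = H^*(B) \otimes \ker(\smile)$; transporting via $\Phi_{EE}$ identifies this with $\ker(\Delta^*)$ as an $H^*(B)$-module.

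The main obstacle is the identification in the middle step: Leray-Hirsch is only a module, not an algebra, isomorphism, and a priori $c_j \smile c_k$ in $H^*(E)$ may differ from the naive value predicted by the fibre product $\bar c_j \smile \bar c_k$ by corrections lying in $p^*(H^{>0}(B)) \cdot H^*(E)$. One handles this by filtering both sides by powers of $p^*(H^{>0}(B))$: the map $\Delta^*$ respects this filtration, and on the associated graded it is literally $\mathrm{id}_{H^*(B)} \otimes \smile$. Because source and target are free (hence flat) $H^*(B)$-modules, a standard lifting argument upgrades the associated-graded identification of kernels to a genuine $H^*(B)$-module isomorphism, completing the proof.
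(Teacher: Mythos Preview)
Your setup---applying Leray--Hirsch to both $p\colon E\to B$ and the induced fibration $E\times_B E\to B$, then comparing $\Delta^*$ with $\mathrm{id}\otimes\smile$ through the two isomorphisms $\Phi_E,\Phi_{EE}$---is exactly the paper's route. The paper simply asserts that the resulting square commutes and reads off the kernel. You are right to flag that this square does \emph{not} commute on the nose: $\Delta^*\Phi_{EE}(1\otimes\bar c_j\otimes\bar c_k)=c_j\smile c_k$ in $H^*(E)$, whereas $\Phi_E(1\otimes(\bar c_j\smile\bar c_k))$ is obtained by writing $\bar c_j\smile\bar c_k$ in the basis $\{\bar c_l\}$ and lifting termwise---and these differ precisely by the base-correction terms you describe. (The paper's own example in Section~\ref{sec8}, where $\omega_{13}\omega_{23}=\omega_{12}(\omega_{23}-\omega_{13})$ in $H^*(E)$ but $\bar\omega_{13}\bar\omega_{23}=0$ in $H^*(X)$, already exhibits the failure.) So you have identified a genuine subtlety that the paper glosses over.

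Your filtration repair is headed in the right direction, but the closing ``standard lifting argument'' is too vague to stand as a proof. Here is a crisper finish that avoids filtrations. The $H^*(B)$-linear section $\sigma\colon H^*(B)\otimes H^*(X)\to H^*(B)\otimes H^*(X)\otimes H^*(X)$, $b\otimes x\mapsto b\otimes 1\otimes x$, corresponds under $\Phi_E,\Phi_{EE}$ to $\pi_2^*\colon H^*(E)\to H^*(E\times_B E)$; since $\pi_2\circ\Delta=\mathrm{id}_E$ one gets $\Delta^*\pi_2^*=\mathrm{id}$. Thus \emph{both} the transported map $D=\Phi_E^{-1}\circ\Delta^*\circ\Phi_{EE}$ and $\mathrm{id}\otimes\smile$ are split by the \emph{same} section $\sigma$, so both kernels are complements of the common summand $\mathrm{im}\,\sigma$ inside $H^*(B)\otimes H^*(X)\otimes H^*(X)$, hence each is isomorphic (as an $H^*(B)$-module) to the quotient by $\mathrm{im}\,\sigma$, and therefore to the other. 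Flatness of $H^*(B)$ then identifies $\ker(\mathrm{id}\otimes\smile)$ with $H^*(B)\otimes\ker(\smile)$, completing the argument.
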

\begin{proof} Applying the K\"unneth formula, we see that the cohomology classes 
\begin{eqnarray}\label{new1}
C_{j,j'} =f^\ast(c_j\times c_{j'})\in H^\ast(E\times_B E; R)
\end{eqnarray}
restrict to a free basis on each fibre $X\times X$ of the fibration $p': E\times_B E\to B$ where $f: E\times_B E \to E\times E$ denotes the inclusion; therefore the fibration $p'$ satisfies the assumptions of the Leray-Hirsch theorem as well. We obtain an isomorphism
$$\Phi': H^\ast(B;R) \otimes H^\ast(X\times X; R)\to H^\ast(E\times_B E;R)$$
which appears in the following commutative diagram
$$
\xymatrix{
H^\ast(B;R)\otimes_R H^\ast(X\times X;R) \ar[r]^{\hskip 0.6cm \Phi'}_{\hskip 0.6cm \simeq} \ar[d]_{1\otimes \Delta_X^\ast} & H^\ast(E\times_B E;R)\ar[d]^{\Delta^\ast}\\
H^\ast(B;R)\otimes_R H^\ast(X;R) \ar[r]_{\Phi}^{\, \, \simeq} & H^\ast(E;R).
}
$$
Here $\Delta_X: X\to X\times X$ is the diagonal.  To check commutativity one compares (\ref{eq:LH}) and (\ref{new1}). 
We obtain that the kernel of $\Delta^\ast$ coincides (after applying the isomorphisms $\Phi'$ and $\Phi$) with 
$\ker(1\otimes \Delta_X^\ast)$.  Since $H^\ast(B;R)$ is flat as an $R$-module we obtain
$\ker(1\otimes \Delta_X^\ast) =H^\ast(B;R)\otimes \ker(\smile)$. 
\end{proof}

This result may seem to suggest that under the assumptions of the Leray-Hirsch theorem the cup-length of the kernel (\ref{delta1}) equals the zero-divisors cup-length of the fibre. We shall see below that it is not the case. 
The isomorphism of Proposition \ref{prop:zero} is only additive and in reality the multiplicative structure of $\ker(\Delta^\ast)$ is {\it a deformation} of the ideal of the zero-divisors of the fibre.

\section{Motion planing for a robot in 3-space and two  obstacles with unknown in advance positions} \label{sec:small}\label{sec8}

Here we consider the situation of parametrised motion planning in 3-dimensional space with two floating obstacles, it is a special case of the situation considered in Example \ref{ex1}. 
This discussion is intended to illustrate our general Theorem \ref{thm:main}. 

The obstacles are represented by two distinct points $o_1, o_2\in \R^k$ where $k=3$. The position of the robot is represented by $z\in \R^k-\{o_1, o_2\}$. 
The relevant fibration is the Fadell-Neuwith fibration
\begin{equation} \label{eq:FNsmall}
p\colon E= F(\R^k,3) \to B= F(\R^k,2),\quad p(o_1,o_2,z)=(o_1,o_2).
\end{equation}
The fibre of this locally trivial fibration has the homotopy type of a bouquet of two spheres
\[
X=p^{-1}(o_1,o_2) \cong \{z\in \R^k \mid z\neq o_1,z\neq o_2\} \simeq S^{k-1}\vee S^{k-1}.
\] 
The base space $B=F(\R^k,2) \simeq S^{k-1}$,  
has the homotopy type of a sphere, and  the total space $E=F(\R^k,3)$ has the homotopy type of a $2(k-1)$-dimensional CW-complex. 

Since the fibre $X\simeq S^{k-1}\vee S^{k-1}$ of the fibration \eqref{eq:FNsmall} is $(k-2)$-connected, Proposition \ref{lem:upper2} shows that, for any $k\ge 2$, the parameterised topological complexity admits the upper bound
\begin{equation} \label{eq:dimsmall}
\tc[p\colon F(\R^k,3) \to F(\R^k,2)] \le 3.
\end{equation}
We show below that for $k=3$ one has  $\tc[p\colon F(\R^k,3) \to F(\R^k,2)] = 3.$
Note that the (unparameterised) topological complexity of the fibre is $\tc(X)=2$, see \cite{Fa05}. 

Below we consider integral cohomology, skipping $\Z$ in the notation. It is known (see Theorem V.4.2 from \cite{FH}) that the space $F(\R^k, 3)$ has three $(k-1)$-dimensional cohomology classes
$\omega_{12}, \, \omega_{13}, \, \omega_{23}$ which satisfy the following relations 
\begin{eqnarray}\label{braid}
\omega_{ij}^2 =0, \quad \mbox{and}\quad \omega_{13}\omega_{23} =\omega_{12}(\omega_{23}-\omega_{13})
\end{eqnarray}
and generate the cohomology ring.
Next we explain the relationship between 
Theorem V.4.2 from \cite{FH} and the Lerey-Hirsch theorem; similar arguments will be used later in this paper.
First note that the class $\omega_{ij}$ is defined as the pull-back of the fundamental class $u\in H^{k-1}(F(\R^k, 2))$ 
under the projection on the $i$-th and $j$-th particles of the configuration; here the index $i=1$ corresponds to $o_1$, the index $i=2$ corresponds to $o_2$ and the index $i=3$ corresponds to $z$. 
The classes $\omega_{13}$ and $\omega_{23}$ restrict to a set of free generators of the fibre; hence we see that fibration (\ref{eq:FNsmall}) satisfies the assumptions of the 
Leray-Hirsch theorem. The class $\omega_{12}$ generates the cohomology of the base $B$. 
By the Leray-Hirsch theorem the following classes form an additive base of the integral cohomology of $F(\R^k, 3)$ in positive degrees: 
$$\omega_{12}, \, \, \omega_{13}, \, \, \omega_{23}, \, \, \omega_{12}\omega_{13},\, \,  \omega_{12}\omega_{23}.$$ The first 3 classes have degree $(k-1)$ and the last 2 classes have degree $2(k-1)$. Relation (\ref{braid}) expresses the product of the \lq\lq fibrewise\rq\rq\,  classes 
$\omega_{13}\omega_{23}$ in terms of the generators in degree $2(k-1)$ mentioned above. Note that the products 
$\omega_{12}\omega_{13}$ and $\omega_{12}\omega_{23}$ vanish when restricted to the fibre. 

The three term relation (\ref{braid}) represents the cup-product of the total space $H^\ast(E)$ as {\it a deformation} of the tensor product algebra $H^\ast(B)\otimes H^\ast(X)$. 

Next we consider cohomology of the space $E\times_B E$ which can be identified with the configuration space of tuples $(o_1, o_2, z, z')\in (\R^k)^4$ satisfying $o_1\not=o_2,$
$z\not=o_1$, $z\not= o_2$, $z'\not=o_1$, $z'\not= o_2$. By Proposition \ref{prop:zero} the fibration $p':E\times_B E\to B$ satisfies the assumptions of the Leray-Hirsch theorem and hence the additive structure of the cohomology is given by 5 classes
\begin{eqnarray}\label{one}
\omega_{12}, \, \omega_{13}, \, \omega_{23}, \, \omega'_{13}, \, \omega'_{23}\end{eqnarray}
of degree $(k-1)$, by 8 classes of degree $2(k-1)$
\begin{eqnarray}\label{two}
\omega_{13}\omega'_{13}, \, \omega_{13}\omega'_{23}, \, \omega_{23}\omega'_{13}, \, \omega_{23}\omega'_{23}, \, \omega_{12}\omega_{13}, \, \omega_{12}\omega_{23}, \, \omega_{12}\omega'_{13}, \, \omega_{12}\omega'_{23},\end{eqnarray}
and by 4 classes of degree $3(k-1)$
\begin{eqnarray}\label{three}
\omega_{12} \omega_{13}\omega'_{13}, \, \omega_{12} \omega_{13}\omega'_{23}, \, \omega_{12} \omega_{23}\omega'_{13}, \, \omega_{12} \omega_{23}\omega'_{23}.\end{eqnarray}
Multiplicatively these classes satisfy the following relations
$$\omega_{ij}^2=0, \quad {\omega'_{ij}}^2=0$$ and two three term relations
\begin{eqnarray}\label{2braid}
\omega_{13}\omega_{23}=\omega_{12}(\omega_{23}-\omega_{13}), \quad \omega'_{13}\omega'_{23}=\omega_{12}(\omega'_{23}-\omega'_{13}).\end{eqnarray}
The first relation (\ref{2braid}) follows from the fact that the classes $\omega_{ij}$ are pull-backs of the corresponding classes in $H^\ast(E)$ under the projection on the first coordinate; similarly for the second relation in (\ref{2braid}). 

Finally we examine the map $\Delta: E\to E\times_B E$ and the homomorphism induced on cohomology. Since 
$\Delta^\ast(\omega_{ij})=\Delta^\ast(\omega'_{ij})$ we see that the classes $\omega_{13}-\omega'_{13}$ and $\omega_{23}-\omega'_{23}$ lie in the kernel of $\Delta^\ast$. 
We claim that the product
\begin{eqnarray}
(\omega_{13}-\omega'_{13})^2 (\omega_{23}-\omega'_{23})\in H^\ast(E\times_BE)
\end{eqnarray}
is nonzero. Note that here we use our assumption that $k=3$ since for $k$ even the square above would vanish, since then the class $\omega_{13}-\omega'_{13}$ has degree $k-1$ (odd) and the square of any cohomology class of odd degree is zero. 
Our claim is equivalent to the non-vanishing of the product 
\begin{eqnarray*}
\omega_{13}\omega'_{13}(\omega_{23}-\omega'_{23}) &=& \omega_{13}\omega'_{13}\omega_{23} - \omega_{13}\omega'_{13}\omega'_{23}\\
&=& \omega_{13}\omega_{23}\omega'_{13} -  \omega'_{13}\omega'_{23}\omega_{13}\\
&=& \omega_{12}(\omega_{23}-\omega_{13})\omega'_{13} - \omega_{12}(\omega'_{23} -\omega'_{13})\omega_{13}\\
&=&  \omega_{12}\omega_{23}\omega'_{13} -\omega_{12}\omega_{13}\omega'_{23} .
\end{eqnarray*}
We see that this class is the difference of two distinct generators from the list (\ref{three}) and hence it is nonzero. Applying Proposition \ref{cup} combined with the upper bound 
(\ref{eq:dimsmall}) we obtain $$\tc[p:F(\R^k, 3)\to F(\R^k, 2)]=3$$ for $k=3$.\footnote{Note that this argument applies without modifications to any odd dimension $k\ge 3$.}

By Theorem 2.2 from \cite{FGY}, $\tc(\R^3 - \{o_1, o_2\})=2$ (in normalised notations). Thus we see that the parametrised topological complexity in this example is higher than the standard (i.e. non-parametrised) one.

\section{Obstacle-avoiding collision-free motion of multiple robots in the presence of multiple obstacles with unknown in advance positions} \label{sec:config pTC}\label{sec9}

In this section we generalise the result of the previous section in several directions: firstly we allow multiple robots moving ovoiding collisions, secondly we allow an arbitrary number of obstacles. Our main focus is on the case of 3-dimensional underlying space however our results are applicable more generally to Euclidean space $\R^k$ of any dimension $k$ 
with the only restriction that $k$ must be odd. 

In the case when the dimension $k$ is even the final answer is slightly different, it requires different lower and upper bounds and will be described in a separate publication. 

Our setting is as follows: we consider $m\ge 2$ obstacles in $\R^k$ represented by pairwise distinct points $z_1, \dots, z_m$, there are also $n$ robots represented by the points $z_{m+1}, \dots, z_{m+n}\in \R^k$, these points must be pairwise distinct and distinct from the positions of the obstacles. As described in Example \ref{ex1} we have to consider the Fadell-Neuwirth fibration
\begin{eqnarray}\label{FN1}
p: F(\R^k, n+m)\to F(\R^k, m), 
\end{eqnarray}
where $ (z_1, \dots, z_m, z_{m+1}, \dots, z_{m+n})\mapsto (z_1, \dots, z_m)$ and compute its parametrised topological complexity. 
In the previous section we considered the special case $m=2$, $n=1$ and $k=3$. It will be convenient to use the notation $p:E\to B$ for (\ref{FN1}); the fibre $F(\R^k - {\mathcal O}_m, n)$ of this fibration will be denoted by $X$. Here $\mathcal O_m$ denotes a configuration of $m$ distinct points representing the obstacles. 

To explain our assumption $m\ge 2$ we note that in the case $m=1$ the base of the fibration (\ref{FN1}) is contractible and hence the fibration is trivial. 
Example \ref{ex42} shows that in this case the parametrised topological complexity coincides with 
$\tc (F(\R^k - {\mathcal O}_1, n))$ which is known \cite{FGY}. 

In this section we shall prove the following theorem which can be viewed the main result of this paper. 

\begin{theorem} \label{thm:main} Let $k\ge 3$ be odd. 
The parameterised topological complexity of the motion of $n\ge 1$ non-colliding robots in $\R^k$ in the presence of $m\ge 2$ non-colliding obstacles is equal to $2n+m-1$. 
In other words, the parameterized topological complexity of the Fadell-Neuwirth bundle 
$p\colon F(\R^k,n+m) \to F(\R^k,m)$ is
\begin{equation} \label{eq:pTCFN}
{\tc}\bigl[p\colon F(\R^k,n+m) \to F(\R^k,m)\bigr]=2n+m-1.
\end{equation}
\end{theorem}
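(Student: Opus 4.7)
The plan is to establish the two bounds $\tc_B(X) \le 2n+m-1$ and $\tc_B(X) \ge 2n+m-1$ separately. Throughout, denote by $\omega_{ij}, \omega_{ij}' \in H^{k-1}(E \times_B E)$ the Fadell-Neuwirth / Orlik-Solomon classes introduced in Section \ref{sec:small}, where $\omega_{ij}'$ is defined whenever $j > m$ is a robot index (the primed index referring to the second copy of the robot coordinate in $E \times_B E$).

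For the upper bound, the fibre $X = F(\R^k - \mathcal{O}_m, n)$ is $(k-2)$-connected by an inductive argument on the iterated Fadell-Neuwirth tower, whose successive fibres $\R^k - \{\text{finite points}\} \simeq \bigvee S^{k-1}$ are $(k-2)$-connected. The bundle $p'\colon E \times_B E \to B$ has fibre $X \times X$, so using the standard homotopical dimensions $\hdim F(\R^k, m) = (m-1)(k-1)$ and $\hdim X = n(k-1)$,
\[
\hdim(E \times_B E) \le \hdim(B) + \hdim(X \times X) = (m-1)(k-1) + 2n(k-1) = (2n+m-1)(k-1).
\]
Proposition \ref{lem:upper2} with $r = k-2$ then gives
\[
\tc_B(X) < \frac{(2n+m-1)(k-1) + 1}{k-1} = (2n+m-1) + \tfrac{1}{k-1} \le 2n+m-\tfrac{1}{2},
\]
since $k \ge 3$. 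As $\tc_B(X)$ is an integer, this forces $\tc_B(X) \le 2n+m-1$.

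For the lower bound I would apply Proposition \ref{cup}. Each class $\omega_{ij} - \omega_{ij}'$ lies in $\ker \Delta^*$, since $y_j = y_j'$ along the diagonal. Consider the product of $2n + (m-1)$ such factors,
\[
P = \prod_{j=m+1}^{m+n}(\omega_{1j} - \omega_{1j}')^2 \cdot \prod_{i=2}^{m}(\omega_{i, m+1} - \omega_{i, m+1}') \in H^{(2n+m-1)(k-1)}(E \times_B E).
\]
Since $k$ is odd, $k-1$ is even, the classes $\omega_{ij}$ commute, $\omega_{ij}^2 = 0$, and $(\omega - \omega')^2 = -2\omega\omega'$, so
\[
P = (-2)^n \prod_{j=m+1}^{m+n}\omega_{1j}\omega_{1j}' \cdot \prod_{i=2}^{m}(\omega_{i, m+1} - \omega_{i, m+1}').
\]
Showing $P \ne 0$ will then imply $\tc_B(X) \ge 2n+m-1$.

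The main obstacle is precisely verifying $P \ne 0$. My approach would be to expand the remaining $m-1$ differences and repeatedly apply the three-term braid relation $\omega_{ik}\omega_{jk} = \omega_{ij}(\omega_{jk} - \omega_{ik})$ (and its primed analogue) to rewrite $P$ in an Orlik-Solomon no-broken-circuit basis of $H^*(E \times_B E)$. The computation carried out in Section \ref{sec:small}, which after many cancellations produces $P = -2(\omega_{12}\omega_{23}\omega_{13}' - \omega_{12}\omega_{13}\omega_{23}')$, is the template. The combinatorial bookkeeping is delicate because $P$ expands into an alternating sum with exponentially many terms, and one must demonstrate the survival of a specific basis monomial. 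A natural framework is to invoke Leray-Hirsch, applicable to both $p$ and $p'$ (cf.\ Proposition \ref{prop:zero}), to select an additive basis of $H^*(E \times_B E)$ adapted to the fibration over $B$, and then track the coefficient of a leading top-degree basis monomial of the form (top class of $B$) $\cdot$ (top class of $X \times X$), which should equal $\pm 2^n$ and hence be nonzero.
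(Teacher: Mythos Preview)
Your proposal is correct and takes essentially the same approach as the paper: the upper bound via Proposition~\ref{lem:upper2} is identical, and your product $P$ is exactly the paper's class $x$ in~\eqref{x}. For the step you flag as the main obstacle, the paper proves a combinatorial identity (Lemma~\ref{lm:64}) rewriting $\prod_{i\in T}\omega_{ip}$ in the Leray--Hirsch basis of Proposition~\ref{prop:63}, and then isolates one specific basis monomial (namely~\eqref{monomial}) that arises from a unique choice of subsets in the expansion and therefore cannot cancel---so one need not track all terms, only argue that this single monomial is produced exactly once.
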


Note that for $k=3$ the standard (i.e. nonparametrised) topological complexity of the fibre of the Fadell-Neuwirth fibration is $2n$, 
see  Theorem 5.1 of \cite{FGY}.\footnote{The arguments of the proof of Theorem 5.1 from \cite{FGY} extend with minor modifications to the case 
$k\ge 5$  odd.}
Thus we see that the parametrised topological complexity exceeds the standard one by approximately the number of obstacles. 

%
%
%
%
%
%
%
%

The rest of this section is devoted to the proof of Theorem \ref{thm:main}.

First, we apply Proposition \ref{lem:upper2} to get an upper bound. We note that 
the fibre $X$ 
of the Fadell-Neuwirth fibration (\ref{FN1}) is $(k-2)$-connected and  $\hdim X = (k-1)n$, while the homotopical dimension of the base is $\hdim B= (k-1)(m-1)$.
Hence the space $E\times_B E$ has homotopical dimension $(k-1)(2n+m-1)$. 
Consequently, from the upper bound estimate of Proposition \ref{lem:upper2}, we obtain ${\tc}\bigl[p: E\to B\bigr]  < 2n+m-1+\frac{1}{k-1}$, that is
\begin{equation} \label{eq:rough upper}
\begin{aligned}
{\tc}\bigl[p\colon F(\R^k,n+m) \to F(\R^k,m)\bigr]  \le 2n+m-1.
\end{aligned}
\end{equation}
This gives an upper bound in \eqref{eq:pTCFN}.

The proof of Theorem \ref{thm:main} will use the structure of the integral cohomology ring 
$H^*(E\times_B E)$, which we describe next.

\begin{prop} \label{prop:HEBE}
The integral cohomology ring $H^*(E\times_B E)$ contains degree $k-1$ classes 
$$\omega_{ij}, \, \, \omega'_{ij}\quad \mbox{where} \quad 1\le i< j \le n+m,$$ which satisfy the following relations
\begin{eqnarray*}
\begin{array}{llll}
&\omega_{ij}^2 =(\omega'_{ij})^2=0 \quad &\text{for all}\quad & i<j,\\ \\
&\omega_{ir}\omega_{jr}= \omega_{ij}(\omega_{jr}-\omega_{ir} )    \quad &\text{for all}\quad &  i<j <r,\\ \\
&\omega'_{ir}\omega'_{jr}= \omega'_{ij}(\omega'_{jr}-\omega'_{ir} ) \quad  &\text{for all}\quad & i<j <r,\\ \\
&\omega_{ij}=\omega'_{ij}\quad &\text{for all}\quad & 1\le i<j \le m.
\end{array}
\end{eqnarray*}
\end{prop}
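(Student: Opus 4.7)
The plan is to construct the classes $\omega_{ij}$ and $\omega'_{ij}$ as pullbacks of the classical Arnold/Cohen-Taylor generators of $H^\ast(F(\R^k, n+m))$ along the two natural projections $\pi_1, \pi_2 \colon E \times_B E \to E$, and then to derive all the stated relations by functoriality. Recall that $E \times_B E$ is the pullback of the Fadell-Neuwirth fibration $p \colon E \to B$ with itself, so in particular the two projections satisfy $p \circ \pi_1 = p \circ \pi_2$. From the work of Arnold, Fadell-Neuwirth and F.\ Cohen, the integral cohomology ring $H^\ast(F(\R^k, N))$ is generated in degree $k-1$ by classes $A_{ij}$, indexed by $1 \le i < j \le N$, where $A_{ij}$ is the pullback of the fundamental class $u \in H^{k-1}(F(\R^k, 2)) \cong H^{k-1}(S^{k-1})$ under the forgetful map $F(\R^k, N) \to F(\R^k, 2)$ retaining only the $i$-th and $j$-th coordinates. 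These classes satisfy $A_{ij}^2 = 0$ (since $u^2 = 0$) together with the three-term Arnold relation $A_{ir}A_{jr} = A_{ij}(A_{jr} - A_{ir})$ for $i < j < r$, which already holds in $H^\ast(F(\R^k, 3))$; this last fact was already used in Section \ref{sec:small}.

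Writing $A_{ij}^E \in H^{k-1}(E)$ for these generators with $N = n+m$, and $A_{ij}^B \in H^{k-1}(B)$ for the analogous generators with $N = m$, I would define
\[
\omega_{ij} := \pi_1^\ast(A_{ij}^E), \qquad \omega'_{ij} := \pi_2^\ast(A_{ij}^E),
\]
for all $1 \le i < j \le n+m$. Then the relations $\omega_{ij}^2 = 0 = (\omega'_{ij})^2$ and both three-term relations $\omega_{ir}\omega_{jr} = \omega_{ij}(\omega_{jr} - \omega_{ir})$ and $\omega'_{ir}\omega'_{jr} = \omega'_{ij}(\omega'_{jr} - \omega'_{ir})$ follow immediately by applying $\pi_1^\ast$ or $\pi_2^\ast$ to the corresponding relations in $H^\ast(E)$.

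It remains only to verify the coincidence $\omega_{ij} = \omega'_{ij}$ for $1 \le i < j \le m$. For such indices the forgetful map $F(\R^k, n+m) \to F(\R^k, 2)$ used to define $A_{ij}^E$ factors through the Fadell-Neuwirth projection $p \colon E \to B$: it equals $p$ followed by the analogous forgetful map $F(\R^k, m) \to F(\R^k, 2)$ that defines $A_{ij}^B$. Hence $A_{ij}^E = p^\ast(A_{ij}^B)$, and using $p\pi_1 = p\pi_2$ one obtains
\[
\omega_{ij} = \pi_1^\ast p^\ast(A_{ij}^B) = (p\pi_1)^\ast(A_{ij}^B) = (p\pi_2)^\ast(A_{ij}^B) = \pi_2^\ast p^\ast(A_{ij}^B) = \omega'_{ij},
\]
as required. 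There is no substantial obstacle in this argument: the whole proposition is a formal consequence of the classical Arnold presentation of configuration-space cohomology together with a pullback-square diagram chase. The only point requiring a moment's thought is the factorisation of the $(i,j)$-th forgetful map through $p$, which is immediate once both $i$ and $j$ lie in $\{1,\dots,m\}$. The real work -- the non-vanishing of a suitable product of these classes needed for the lower bound on $\tc_B(X)$ -- lies in the next step of the paper, not in this proposition itself.
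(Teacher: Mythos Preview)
Your argument is correct and is essentially the same as the paper's: both construct the classes as pullbacks of the Arnold generators of $H^\ast(F(\R^k,n+m))$ (equivalently, of the fundamental class of $F(\R^k,2)$) and read off all relations by functoriality. The only cosmetic difference is that the paper writes the projections $E\times_B E \to F(\R^k,2)$ directly and then \emph{formally defines} $\omega'_{ij}=\omega_{ij}$ for $i<j\le m$, whereas you factor through the two projections $\pi_1,\pi_2\colon E\times_B E\to E$ and obtain that coincidence from $p\pi_1=p\pi_2$; this is the same content.
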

\begin{proof} A point of the space $E\times_B E$ can be represented by an $(m+2n)$-tuple of the form 
$$(z_1, \dots, z_m, z_{m+1}, \dots, z_{m+n}, z'_{m+1}, \dots, z'_{m+n}) \in (\R^k)^{m+2n}.
$$ where the first $m$ points represent $m$ obstacles, and the tuples 
$(z_{m+1}, \dots, z_{m+n})$ and $(z'_{m+1}, \dots, z'_{m+n})$ 
represent the initial and the final configurations of the robots. Clearly, we require that for any $i<j$ 
\begin{eqnarray}\label{not=}
z_i\not= z_j \quad \mbox{and}\quad z'_i\not=z'_j,\end{eqnarray} 
and besides
\begin{eqnarray}\label{not=2}
z_i\not= z'_j\quad \mbox{for} \quad i\le m; 
\end{eqnarray}
note that index $j$ in (\ref{not=2}) automatically satisfies $j>m$. 
Conditions (\ref{not=}) and (\ref{not=2}) guarantee that no collisions between robots and between obstacles happen. 

For any pair of indexes $i<j$ satisfying either (\ref{not=}) or (\ref{not=2}) we may view the corresponding pair of points as an element of the configuration space $F(\R^k, 2)$ of pairs of distinct points in $\R^k$. The space $F(\R^k, 2)$
is homotopy equivalent to the sphere $S^{k-1}$ and has therefore a fundamental class in $H^{k-1}(F(\R^k, 2))$. 
Using this, for $i<j$ one defines the class $\omega_{ij}\in H^{k-1}(E\times_B E)$ as the pull-back of the fundamental class
under the map $E\times_B E\to F(\R^k, 2)$ projecting configurations in $E\times_B E$ into the pairs $(z_i, z_j)$. 
Similarly, 
for $i\le m$ and $j>m$ one defines the class 
 $\omega'_{ij}\in H^{k-1}(E\times_B E)$ as the pull-back of the fundamental class under the map 
 $E\times_BE\to F(\R^k, 2)$ projecting $E\times_B E$ onto the pairs $(z_i, z'_j)$. Besides, for $m<i<j\le m+n$ we define $\omega'_{ij}$ as above by using the projection  $E\times_BE\to F(\R^k, 2)$ on the pair $(z'_i, z'_j)$. 
 Finally we formally define $\omega'_{ij}$ for $i<j\le m$ by setting
 $\omega'_{ij}=\omega_{ij}$. 
 
 All the relations mentioned in the statement of Proposition \ref{prop:HEBE} are well known 
to hold in cohomology of configuration spaces, see \cite[Chapter V]{FH}. Since our classes are pull-backs of cohomology classes originating from configuration spaces these relations hold as well. 
\end{proof}

Next we introduce the following notations. 
We shall consider sequences $I=(i_1, i_2, \dots, i_p)$ and $J=(j_1, j_2, \dots, j_p)$ 
consisting of elements of the index set $$\{1, 2, \dots, m+n\}$$ such that $i_s<j_s$ for all $s=1, \dots, p$; we shall express this by $I<J$ for brevity. 
For each such 
pair $I<J$ we define a cohomology class
$$\omega_{IJ} =\omega_{i_1 j_1}\omega_{i_2 j_2}\dots \omega_{i_p j_p}\in H^{(k-1)p}(E\times_B E)$$
as the cup-product of the classes $\omega_{ij}$ defined above. 
The classes 
$$\omega'_{IJ}\in H^{(k-1)p}(E\times_B E)$$ are defined similarly where instead of the classes $\omega_{ij}$ one takes the classes $\omega'_{ij}$. The case $p=0$ is also allowed; in this case we define $\omega_{IJ}=1=\omega'_{IJ}$. 

A sequence $J=(j_1, j_2, \dots, j_p)$ is said to be {\it increasing} if 
$j_1<j_2<\dots<j_p$.

\begin{proposition}\label{prop:63} A free basis of the abelian group $H^\ast(E\times_B E)$  is given by the set of cohomology classes of the form 
\begin{eqnarray}
\omega_{I_1J_1}\omega_{I_2J_2}\omega'_{I_3J_3},
\end{eqnarray}
where $I_1<J_1$, \, $I_2<J_2$, \, $I_3<J_3$, the sequences $J_1, J_2, J_3$ are increasing and $J_1$ takes values in $\{1, \dots, m\}$, while $J_2$ and $J_3$ take values in $\{m+1, \dots, m+n\}$. 
\end{proposition}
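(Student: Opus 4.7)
The plan is to combine three standard ingredients: the Cohen--Taylor--Fadell--Neuwirth description of the integral cohomology of $F(\R^k,N)$, the Leray--Hirsch type isomorphism of Proposition~\ref{prop:zero} applied to the pulled-back fibration $p'\colon E\times_B E\to B$ (whose fibre is $X\times X$), and the K\"unneth formula for that fibre. First I would recall from \cite[Chapter~V]{FH} that for every $N\ge 1$ the integral cohomology $H^*(F(\R^k,N))$ is a free abelian group with $\Z$-basis the admissible monomials $\omega_{IJ}=\omega_{i_1j_1}\cdots\omega_{i_pj_p}$ indexed by pairs $I<J$ with $J=(j_1<\cdots<j_p)$ strictly increasing. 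Specialising to $B=F(\R^k,m)$ produces the classes $\omega_{I_1J_1}$ with $J_1$ increasing in $\{1,\dots,m\}$ and $I_1<J_1$ as a $\Z$-basis of $H^*(B)$; specialising to the fibre $X\simeq F(\R^k-\mathcal{O}_m,n)$, with the $n$ moving particles labelled $m+1,\dots,m+n$, produces the classes $\omega_{I_2J_2}$ with $J_2$ increasing in $\{m+1,\dots,m+n\}$ and $I_2<J_2$ as a $\Z$-basis of $H^*(X)$.

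Next I would verify that the Fadell--Neuwirth fibration $p\colon E\to B$ satisfies the Leray--Hirsch hypothesis: the classes $\omega_{ij}\in H^*(E)$ with $j>m$ restrict on each fibre $X$ to the generators described above, so the admissible monomials in them lift the Cohen--Taylor basis of $H^*(X)$ to $H^*(E)$. Since $H^*(B)$ is $\Z$-free, hence flat, Proposition~\ref{prop:zero} (or rather the argument in its proof) applies to $p'\colon E\times_B E\to B$ with fibre $X\times X$ and yields an additive $H^*(B)$-module isomorphism
\[
H^*(E\times_B E)\;\cong\;H^*(B)\otimes_\Z H^*(X)\otimes_\Z H^*(X),
\]
where the two tensor factors $H^*(X)$ come from the K\"unneth decomposition of $H^*(X\times X)$. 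Under the fibrewise restriction the classes $\omega_{ij}$ with $j>m$ land in $H^*(X)\otimes 1$ and the primed classes $\omega'_{ij}$ land in $1\otimes H^*(X)$; consequently the cup product $\omega_{I_2J_2}\omega'_{I_3J_3}$ (with $J_2,J_3$ increasing in $\{m+1,\dots,m+n\}$ and $I_2<J_2$, $I_3<J_3$) is the corresponding lift of a K\"unneth basis element.

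Putting this together, the monomials $\omega_{I_1J_1}\omega_{I_2J_2}\omega'_{I_3J_3}$ satisfying the stated constraints are the image of an $H^*(B)$-basis multiplied by a fibrewise free basis under the Leray--Hirsch isomorphism, so they form a $\Z$-basis of $H^*(E\times_B E)$. The one step that requires a little care --- the main (and quite mild) obstacle --- is identifying the abstract Leray--Hirsch lifts furnished by Proposition~\ref{prop:zero} with the explicit classes $\omega_{I_2J_2}$ and $\omega'_{I_3J_3}$ defined in Proposition~\ref{prop:HEBE}; this is immediate because all of these classes arise as pull-backs of the fundamental class of $F(\R^k,2)$ under coordinate projections $E\times_B E\to F(\R^k,2)$, and such pull-backs commute with restriction to a fibre. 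Beyond this verification the argument reduces to book-keeping of the three disjoint index ranges for $J_1,J_2,J_3$, which is exactly what the increasingness and location constraints in the statement encode.
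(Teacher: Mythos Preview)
Your proposal is correct and follows essentially the same route as the paper: apply the Leray--Hirsch theorem to the fibration $p'\colon E\times_B E\to B$ with fibre $X\times X$, use the Fadell--Husseini admissible-monomial basis for $H^*(B)$ and for $H^*(X)$ (via K\"unneth for $H^*(X\times X)$), and identify the abstract Leray--Hirsch lifts with the explicit pull-back classes $\omega_{I_2J_2}\omega'_{I_3J_3}$. Your remark that the relevant Leray--Hirsch input is contained in the proof (rather than the statement) of Proposition~\ref{prop:zero} is exactly right.
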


\begin{proof}
We want to apply the Leray-Hirsch theorem to the fibration $E\times_B E\to B$. Recall that $E=F(\R^k, n+m)$ and 
$B=F(\R^k, m)$ and the fibre of this fibration is $X\times X$ where $X=F(\R^k-\mathcal O_m, n)$.
The classes 
$\omega_{ij}$ with $i<j\le m$ originate from the base of this fibration. Moreover, it is well known that the cohomology 
of the base $H^\ast(B)=H^\ast(F(\R^k, m))$ has a free additive basis consisting of the classes $\omega_{IJ}$ where 
$I<J$ run over all sequences of elements of the set $\{1, 2, \dots, m\}$ such that the sequence $J=(j_1, j_2, \dots, j_p)$ is {\it increasing}.
 Clearly here $p$ must be at most $m-1$. 

Next consider the classes $\omega_{I_2J_2}$ and $\omega'_{I_3J_3}$ where $I_2<J_2$, \,  $I_3<J_3$
and the increasing sequences $J_2$ and $J_3$ consist of elements of the set
$\{m+1, \dots, m+n\}$. Using the known results about the cohomology algebras of configuration spaces 
(see \cite[Chapter V, Theorems 4.2 and 4.3]{FH}) as well as the K\"unneth theorem we see that the restrictions 
 of the family of classes
$\omega_{I_2J_2}\omega'_{I_3J_3}$ onto the fibre form a free basis in the cohomology of the fibre $H^\ast(X\times X)$. 
 Hence, applying the Leray-Hirsch theorem \cite{Hatcher}, we obtain that a free basis of the cohomology $H^\ast(E\times _BE)$ is given by the set of classes of the form 
\begin{eqnarray}\label{generator}
\omega_{I_1J_1}\omega_{I_2J_2}\omega'_{I_3J_3}
\end{eqnarray}
where $I_1<J_1$, $I_2<J_2$, $I_3<J_3$, the sequences $J_1, J_2, J_3$ are increasing and $J_1$ takes values in $\{1, \dots, m\}$ while $J_2$ and $J_3$ take values in $\{m+1, \dots, m+n\}$. 
\end{proof}

Using Proposition \ref{prop:63} one may show that the classes $\omega_{ij}, \omega'_{ij}$ are multiplicative generators 
of the cohomology ring $H^\ast(E\times_B E)$ and the relations mentioned in Proposition \ref{prop:HEBE} form a defining set of relation. However we shall not need this statement in this paper. 

Note that the maximal degree of the class of the form (\ref{generator}) is $(2n +m-1)(k-1)$ and the top degree is achieved by 
taking
$J_1=(2, 3, \dots, m)$ and $J_2=J_3 =(m+1, \dots, m+n)$.

Next we consider the diagonal map $\Delta: E\to E\times_B E$. 

\begin{proposition}\label{prop:64} For any $i<j$ the class 
\begin{eqnarray}\label{diff}
\omega_{ij}-\omega'_{ij}
\end{eqnarray} lies in the kernel of the homomorphism 
$\Delta^\ast: H^\ast(E\times_B E) \to H^\ast(E)$ induced by the 
diagonal map $\Delta: E\to E\times_B E$. 
\end{proposition}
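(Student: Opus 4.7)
The proof proposal for Proposition \ref{prop:64} is to unpack the definitions and observe that the classes $\omega_{ij}$ and $\omega'_{ij}$ are defined as pullbacks of a common fundamental class along two auxiliary projection maps that become equal after composing with $\Delta$. Hence naturality of the pullback forces their difference to vanish.

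More precisely, first I would recall how the classes are constructed. The space $F(\R^k,2)$ is homotopy equivalent to $S^{k-1}$ and has a preferred fundamental class $u \in H^{k-1}(F(\R^k,2))$. For each ordered pair of indices $i<j$, there is a projection map $q_{ij}\colon E\times_B E \to F(\R^k,2)$ that extracts the appropriate pair of points from a configuration in $E\times_B E$, and $\omega_{ij}=q_{ij}^*u$; similarly $\omega'_{ij}=(q'_{ij})^*u$ where $q'_{ij}$ extracts the corresponding primed pair.

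Next I would split into the three cases from the definition and, in each case, identify the composition $q_{ij}\circ \Delta$ and $q'_{ij}\circ \Delta$. For $i<j\le m$ the definition gives $\omega'_{ij}=\omega_{ij}$ on the nose. For $i\le m<j$, the map $q_{ij}\circ\Delta$ sends $(z_1,\dots,z_{m+n})\mapsto (z_i,z_j)$, while $q'_{ij}\circ\Delta$ sends $(z_1,\dots,z_{m+n})\mapsto (z_i,z'_j)$ evaluated on the image of $\Delta$, where $z'_j=z_j$; so the two compositions coincide as maps $E\to F(\R^k,2)$. Similarly for $m<i<j$, both compositions send a configuration to $(z_i,z_j)$. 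In all cases $q_{ij}\circ\Delta = q'_{ij}\circ\Delta$, so
\[
\Delta^*\omega_{ij} \;=\; (q_{ij}\circ\Delta)^* u \;=\; (q'_{ij}\circ\Delta)^* u \;=\; \Delta^*\omega'_{ij},
\]
and hence $\Delta^*(\omega_{ij}-\omega'_{ij})=0$.

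No step here is a serious obstacle; the entire content of the proposition is that the projections from $E\times_B E$ onto pairs become identical when restricted to the diagonal, which is essentially a restatement of what the diagonal map does. The only mild bookkeeping point is to confirm the case $i<j\le m$, which is handled by the final clause of Proposition \ref{prop:HEBE} defining $\omega'_{ij}=\omega_{ij}$ for such indices, so the difference vanishes tautologically rather than via a geometric argument.
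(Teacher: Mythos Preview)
Your proposal is correct and follows essentially the same approach as the paper: the paper simply describes $\Delta$ explicitly as repeating the last $n$ coordinates and then says the result follows from the explicit description of $\omega_{ij}$ and $\omega'_{ij}$ as pullbacks given in the proof of Proposition~\ref{prop:HEBE}. Your version spells out the case analysis and the naturality argument that the paper leaves implicit, but the underlying idea is identical.
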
 
\begin{proof}
The points of the space $E=F(\R^k, {n+m})$ are represented by configurations $(z_1, z_2, \dots, z_{n+m})$ of pairwise distinct points of $\R^k$ and  $\Delta$ maps such a configuration to 
$(z_1, z_2, \dots, z_m, z_{m+1}, \dots,  z_{n+m}, z_{m+1}, \dots,  z_{n+m})$ (where the last $n$ points are repeated twice. Our statement now follows from the explicit description of the classes $\omega_{ij}$ and $\omega'_{ij}$ given in the proof of Proposition \ref{prop:HEBE}. 
\end{proof}
%

To prepare the tools to complete the proof of Theorem \ref{thm:main} we shall need Lemma \ref{lm:64} below. 
But first a few notations. Let $T\subset \{1, 2, \dots, m\}$ be a subset and let $p>m$. Consider the following cohomology classes in $H^\ast(E\times_B E)$:
$$\Omega_T = \prod_{i\in T} \omega_{i p}  \quad \mbox{and}
\quad \Omega'_T = \prod_{i\in T} \omega'_{i p}.$$
The classes $\Omega_T$ and $\Omega'_T$ are not among the generators of Proposition \ref{prop:63} but they can be explicitly expressed in terms of the generators, see below. 

From now on we shall assume that the dimension $k\ge 3$ is odd; the degree of the cohomology classes $\omega_{ij}$ and $\omega'_{ij}$ is then even and hence they commute. 
Besides, we shall formally introduce classes $\omega_{ij}$ with $i>j$ based on the convention 
$$\omega_{ij}=-\omega_{ji}.$$

\begin{lemma}\label{lm:64}
One has the identities
\begin{eqnarray}\label{identity}
\Omega_T = (-1)^{|T|-1} \cdot \sum_{i\in T} \left(\prod_{j\in T-\{i\}} \omega_{ij}\right)\cdot \omega_{ip}
\end{eqnarray}
and 
\begin{eqnarray}\label{identity1}
\Omega'_T = (-1)^{|T|-1} \cdot \sum_{i\in T} \left(\prod_{j\in T-\{i\}} \omega_{ij}\right)\cdot \omega'_{ip}.
\end{eqnarray}
\end{lemma}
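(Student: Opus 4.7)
The strategy is to prove both identities by induction on $|T|$, with the three-term relation of Proposition \ref{prop:HEBE} as the sole tool. I will first upgrade that relation to a fully symmetric form: under the convention $\omega_{ij}=-\omega_{ji}$, the identity
\[
\omega_{ir}\omega_{jr}=\omega_{ij}(\omega_{jr}-\omega_{ir})
\]
holds for \emph{every} distinct triple of indices $i,j,r$, without any ordering restriction, as one verifies by a short case check on the relative order of $\{i,j,r\}$ together with the antisymmetry of $\omega$. The analogous extension works for the primed classes.

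It will be convenient to establish the slightly more general statement that, for any finite $S\subset\{1,\dots,n+m\}$ and any $q\notin S$,
\[
\prod_{i\in S}\omega_{iq}=(-1)^{|S|-1}\sum_{i\in S}\Bigl(\prod_{j\in S-\{i\}}\omega_{ij}\Bigr)\omega_{iq}.
\]
The base case $|S|=1$ is immediate. For the inductive step with $|S|=t\ge 2$, I will pick any $i_0\in S$, set $S'=S\setminus\{i_0\}$, and write $\prod_{i\in S}\omega_{iq}=\omega_{i_0 q}\prod_{i\in S'}\omega_{iq}$. Expanding the second factor via the inductive hypothesis and then rewriting each product $\omega_{i_0 q}\omega_{iq}$ for $i\in S'$ as $\omega_{i_0 i}\omega_{iq}-\omega_{i_0 i}\omega_{i_0 q}$ using the extended three-term relation splits $\prod_{i\in S}\omega_{iq}$ into two sums. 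In the first sum, the factor $\omega_{i_0 i}=-\omega_{i i_0}$ can be absorbed into the product $\prod_{j\in S'-\{i\}}\omega_{ij}$, enlarging its index set to $S-\{i\}$ at the cost of one sign flip; this precisely reproduces the $i\ne i_0$ summands of the claimed right-hand side. In the second sum, after factoring out the common $\omega_{i_0 q}$, the remaining inner sum is, up to a controlled sign, exactly the right-hand side of the claim applied to $S'$ with apex $i_0$; the inductive hypothesis — which is valid for \emph{any} apex index not in $S'$, since its proof only uses the extended three-term relation — identifies that inner sum with $\prod_{j\in S'}\omega_{i_0 j}$ (after using $\omega_{j i_0}=-\omega_{i_0 j}$), yielding the missing $i=i_0$ summand.

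Specialising to $S=T$ and $q=p$ gives the first identity. For the second, I will run the same induction with $\omega_{i_0 p},\omega_{ip}$ replaced by $\omega'_{i_0 p},\omega'_{ip}$ and with the primed three-term relation; note that since all internal indices appearing in the product factors lie in $T\subset\{1,\dots,m\}$, the relation $\omega_{ij}=\omega'_{ij}$ for $i<j\le m$ from Proposition \ref{prop:HEBE} keeps those factors unprimed, so only the $\omega'_{ip}$ factor carries a prime in the final expression.

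The main obstacle is the sign bookkeeping: the inductive step must combine the overall sign $(-1)^{|T|-1}$ with the two antisymmetries $\omega_{i_0 i}=-\omega_{i i_0}$ and $\omega_{j i_0}=-\omega_{i_0 j}$ used when reshaping the product factors. Once one commits to writing every product in the standardised form indexed by $T-\{i\}$, the sign computation becomes entirely mechanical.
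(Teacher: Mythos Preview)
Your proof is correct and follows essentially the same inductive scheme as the paper: remove one element from $T$, apply the inductive hypothesis, use the three-term relation to split the resulting expression into two sums, and invoke the inductive hypothesis a second time (with the removed element playing the role of the apex) to identify the second sum with the missing summand. The only difference is cosmetic: the paper removes the \emph{maximal} element of $T$, while you remove an arbitrary $i_0$; and you explicitly formulate and prove the more general statement (arbitrary finite $S$ and arbitrary apex $q\notin S$), which is in fact what the paper's second application of the inductive hypothesis tacitly requires, since there the apex $r$ lies in $\{1,\dots,m\}$ rather than satisfying $r>m$ as in the lemma's stated hypotheses.
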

\begin{proof}
The formula is obviously true for $|T|=1$ and we shall assume by induction that it is true for all subsets of cardinality 
smaller than $|T|$. Let $r$ be the maximal element of $T$ and let $T'=T-\{r\}$. Applying our induction hypothesis and the three term relation we get
\begin{eqnarray*}
\Omega_T &=& \Omega_{T'}\cdot \omega_{rp}  \\
&=&(-1)^{|T|} \sum_{i\in T'} \left(\prod_{j\in T'-\{i\}}\omega_{ij}\right)\cdot \omega_{ip}\cdot \omega_{rp}\\
&=& (-1)^{|T|} \sum_{i\in T'} \left(\prod_{j\in T'-\{i\}}\omega_{ij}\right)\cdot \omega_{ir}(\omega_{rp}-\omega_{ip})\\
&=& (-1)^{|T|-1} \sum_{i\in T'} \left(\prod_{j\in T-\{i\}}\omega_{ij}\right)\cdot\omega_{ip}
+ \left[(-1)^{|T|} \sum_{i\in T'} \left(\prod_{j\in T-\{i\}}\omega_{ij}\right)\right]\cdot \omega_{rp}.
\end{eqnarray*}

Applying the induction hypothesis again we get
$$\prod_{s\in T'} \omega_{sr} = (-1)^{|T|} \, \sum_{i\in T'}\left(\prod_{j\in T'-\{i\}} \omega_{ij}\right) \cdot \omega_{ir}
= (-1)^{|T|} \, \sum_{i\in T'}\left(\prod_{j\in T-\{i\}} \omega_{ij}\right).$$
Hence the sum in the square brackets equals
$$\prod_{s\in T'} \omega_{sr} = (-1)^{|T|-1} \prod_{j\in T-\{r\}} \omega_{rj}.$$
This completes the proof. 
\end{proof}

%
%

Consider the product
\begin{eqnarray}\label{x}
x\, =\, \prod_{i=2}^m (\omega_{i (m+1)}- \omega'_{i (m+1)})\cdot \prod_{j=m+1}^{m+n}(\omega_{1j}-\omega'_{1j})^2. 
\end{eqnarray}
It is a product of $2n+m-1$ elements of the kernel of the homomorphism induced by the diagonal map, see Proposition \ref{prop:64}. Showing $x\not=0$ will allow us to use Proposition \ref{cup} 
 to obtain the lower bound 
$$\tc[F(\R^k, n+m)\to F(\R^k, m)]\ge 2n+m-1$$ on the parametrised topological complexity of the Fadell-Neuwirth bundle. As this coincides with the previously obtained upper bound (\ref{eq:rough upper}), Theorem \ref{thm:main} would follow. 

Since we assume that $k\ge 3$ is odd we have 
$(\omega_{ij}-\omega'_{ij})^2 = - 2\omega_{ij} \omega'_{ij}$ (using Proposition \ref{prop:HEBE}). Hence $x\not=0$ is equivalent to $y\not=0$ where
\begin{eqnarray*}\label{y}
y\, &=&\, \prod_{i=2}^m (\omega_{i (m+1)}- \omega'_{i (m+1)})\cdot \prod_{j=m+1}^{m+n} (\omega_{1 j}\omega'_{1 j}) \\
&=& \, \prod_{i=2}^m (\omega_{i (m+1)}- \omega'_{i (m+1)})\cdot (\omega_{1 (m+1)}\omega'_{1(m+1)})\cdot  \prod_{j=m+2}^{m+n} (\omega_{1 j}\omega'_{1 j})\\
&=& \, \prod_{i=2}^m (\omega_{i (m+1)}- \omega'_{i (m+1)})\cdot (\omega_{1 (m+1)}\omega'_{1(m+1)})\cdot \omega_{IJ}\omega'_{IJ}.
\end{eqnarray*}
Here
$I=\{1, 1, \dots, 1\}$, and $J=\{m+2, m+3, \dots, m+n\}$; note that $I$ is a sequence of length $n-1$ consisting of $1$'s. 
Using  
notations introduced before Lemma \ref{lm:64} 
(with $p=m+1$)
we may write 
\begin{eqnarray}
y= \left[ \sum_{T, S} (-1)^{|S|}\Omega_T\cdot \Omega'_S
\right]\cdot \omega_{IJ}\omega'_{IJ}, 
\end{eqnarray}
where $T, S\subset \{1, 2, \dots, m\}$ run over all subset satisfying $T\cap S =\{1\}$ and $T\cup S=\{1, 2, \dots, m\}$. 
Using the result of Lemma \ref{lm:64} we obtain the following expression for the class $(-1)^{m+1}\cdot y$:
\begin{eqnarray*}
\sum_{T, S} (-1)^{|S|} \left\{ \sum_{i\in T} \left(\prod_{j\in T-i} \omega_{ij}\right)\cdot \omega_{i(m+1)}\right\}
\left\{ \sum_{\alpha\in S} \left(\prod_{\beta\in S-\alpha} \omega_{\alpha\beta}\right)\cdot \omega'_{\alpha(m+1)}\right\}\omega_{IJ}\omega'_{IJ}.
\end{eqnarray*}
Expanding the brackets gives a sum of many monomials, and each of these monomials is one of the generators described in Proposition \ref{prop:63}. However some monomials appear several times and may potentially cancel each other. 

Consider the monomial 
\begin{eqnarray}\label{monomial}
-\omega_{12}\omega_{23}\omega_{24}\dots\omega_{2 m}\omega_{1(m+1)}\omega'_{2 (m+1)}\omega_{IJ}\omega'_{IJ}
\end{eqnarray}
which arises by taking $T=\{1\}$, $S=\{1, 2, \dots, m\}$, $i=1$ and $\alpha=2$. It is easy to see 
that this is the only choice producing this monomial. Indeed, if the set $T$ contained an element $i>1$ then the factor 
$\omega_{1i}$ would be present which is not the case.  
If the set $S$ would miss a certain element $j>2$ then the factor $\omega_{2j}$ would not be present, however all such factors are present. Thus, we see that the class 
$y\in H^\ast(E\times_B E)$ is nonzero since it contain as a summand one of the free generators which cannot be cancelled by any other term. 

This completes the proof of Theorem \ref{thm:main}. \qed

\begin{ack}
Portions of this work were undertaken when the first and second authors visited the University of Florida Department of Mathematics in November, 2019. We thank the department for its hospitality and for providing a productive mathematical environment. We also thank the anonymous referees for their helpful comments.
\end{ack}

%

%
%
%

\newcommand{\arxiv}[1]{{\texttt{\href{http://arxiv.org/abs/#1}{{arXiv:#1}}}}}

\newcommand{\MRh}[1]{\href{http://www.ams.org/mathscinet-getitem?mr=#1}{MR#1}}

\bibliographystyle{amsplain}

\end{document}